\theoremstyle{plain}
\newtheorem{Theorem}{Theorem}
\newtheorem{Corollary}{Corollary}
\newtheorem{Example}{Example}
\newtheorem{Lemma}{Lemma}
\newtheorem{Proposition}{Proposition}
\newtheorem{Question}{Question}
\theoremstyle{Definition}
\newtheorem{Definition}{Definition}
\theoremstyle{Remark}
\newtheorem{Remark}{Remark}
\numberwithin{equation}{section}
\def\fa{{\mathcal{F}}}
\def\po{{\partial}}
\def\vr{{\varphi}}
\def\ga{{\gamma}}
\def\la{{\lambda}}
\def\ov{\overline}
\def\lg{{\langle}}
\def\rg{{\rangle}}
\def\re{{\mathbb{R}}}
\def\bc{{\mathbb{C}}}
\def\Sim{\operatorname{{Sim}}}
\def\Re{\operatorname{{Re}}}
\def\dim{\operatorname{{dim}}}
\def\Ker{\operatorname{{Ker}}}
\def\sing{\operatorname{{sing}}}
\def\Sing{\operatorname{{sing}}}
\def\cod{\operatorname{{cod}}}
\def\grad{\operatorname{{grad}}}
\def\leaderfill{\leaders\hbox to .8em{\hss .\hss}\hfill}
\def\_#1{{\lower 0.7ex\hbox{}}_{#1}}
\def\fa{{\mathcal{F}}}
\def\po{{\partial}}
\def\vr{{\varphi}}
\def\ga{{\gamma}}
\def\la{{\lambda}}
\def\ov{\overline}
\def\lg{{\langle}}
\def\rg{{\rangle}}
\def\re{{\mathbb{R}}}
\def\bc{{\mathbb{C}}}
\def\Re{\operatorname{{Re}}}
\def\dim{\operatorname{{dim}}}
\def\Ker{\operatorname{{Ker}}}
\def\sing{\operatorname{{sing}}}
\def\Sing{\operatorname{{Sing}}}
\def\grad{\operatorname{{grad}}}
\begin{document}
\title{A non-existence theorem for Morse type holomorphic foliations of codimension one
transverse to spheres}
\author{T. Ito and B. Sc\'ardua}

\date{}
{\tiny \maketitle }

\begin{abstract}
We prove that a Morse type codimension one holomorphic foliation
is not transverse to a sphere in the complex affine space. Also we
characterize the variety of contacts of a linear foliation with
concentric spheres.
\end{abstract}


\pagenumbering{arabic}

\section{Introduction} \label{Section:introduction}

One of the main tools in the classical theory of codimension one
real foliations  is a theorem of   A. Haefliger
\cite{Camacho-LinsNeto}
 which implies that an analytic codimension-one foliation admits no
null (homotopic) transversals. The heart of the proof consists of
a description of the dynamics of a real vector field in a
neighborhood of the closed disc $\ov{D^2} \subset \re^2$ and
transverse to the boundary $\po D^2 \simeq S^1$. The use of
Poincar\'e-Bendixson Theorem  shows the existence of some
one-sided hyperbolicity,  for some closed orbit or graph $\ga
\subset D^2$, what is not compatible with the analytical behavior.
Unfortunately, there is no feature like the classical
Poincar\'e-Bendixson Theorem in the case of holomorphic vector
fields. To overcame this difficult is one of the two basic
motivations for the present work. The second comes from \cite{GSV}
where  the authors study the topology of the {\it variety of
contacts} of a holomorphic vector field $F=\sum\limits_{j=1}^n F_j
\frac{\partial}{\partial z_j}$ in an open neighborhood $\mathcal
U$ of the origin $0\in \bc^n$, having an isolated singularity at
$0$, with the spheres around $0$. Let $M$ be this variety of
contacts, i.e., $M=\{ z\in \mathcal U \big| \lg F(z), z \rg
=:\sum\limits_{j=1} ^n F_j(z) \ov z_j= 0\}$. In case the
singularity is in the {\it Siegel domain} we have $M \ne 0$ and
results in \cite{Arnold}, \cite{Camachoetal} and \cite{Guck} imply
that if $F$ is linear and generic then $M\setminus \{0\}$ is a
smooth codimension two manifold in $\bc ^n$, each {\it Siegel
leaf}  of $F$ intersects $M\setminus \{0\}$ in exactly one point,
which corresponds to the minimal distance of the leaf to the
origin. The main result in \cite{GSV} then generalizes some of
these properties to a class of vector fields called of {\it Morse
type}, corresponding to vector fields for which the distance
function on each leaf has only nondegenerate singularities.
Indeed, the class corresponds to those for which the {\it distance
flow}, which is the real analytic vector field defined by $r_F:=-
\ov{\lg F(z), z\rg } F(z)$, has only nondegenerate singularities
on each leaf. Using this notion the authors  are able to prove
that for a Morse type vector field $F$ either $M=0$ or $M$ is a
codimension two real analytic variety, singular only at $0$,
indeed $M$ is a cone and $M\setminus \{0\}$ embeds into $\bc^n$
with trivial normal bundle (it is transversal to the foliation of
$F$), has only a finite number of connected components, and
exhibits some stability with respect to the induced foliations on
the small spheres around the origin.

In this paper we shall begin the  study of the variety of contacts
for a codimension one holomorphic foliation in complex dimension
$n \geq 3$. This is also done by introducing a class of {\it Morse
type} foliations and we prove that for such a foliation we never
have $M=0$. In other words, we give a negative answer for the
following question in case of Morse type foliations:

\begin{Question}
Is there a holomorphic codimension one foliation transverse to the
unit sphere $S^{2n-1}(1)\subset \bc^n$ in $\bc^n$ if $n \geq 3$?
\end{Question}

As mentioned above this question is related to the dynamics of
holomorphic foliations of codimension one and it seems that the
answer is no. In the last part we give a description of the
variety of contacts for generic linear one-forms.

Let us make precise the notions that we use. Let $\fa$ be a
holomorphic foliation, possibly with singularities, on a complex
manifold $V$. Given a smooth (real) submanifold $M\subset V$ we
shall say that $\fa$ is {\em transverse} to $M$ if $\sing(\fa)\cap
M = \emptyset$ and for every $p \in M$ we have $T_p(L_p) + T_p(M)
= T_p(V)$ as real spaces, where $L_p$ denotes the leaf of $\fa$
that contains the point $p \in M$. In this paper we address
Question 1. As it is known such foliation is defined by a
holomorphic one-form $\Omega$, with singular set of codimension
$\geq 2$, and satisfying the integrability condition $\Omega
\wedge d \Omega = 0$.  In  previous papers \cite{Ito-ScarduaMMJ},
\cite{Ito-Scarduamrl} and \cite{Ito-Scarduatopology} we proved the
non-existence of the foliation under some additional conditions:
for instance, $\fa$ is defined by an integrable homogeneous
polynomial one-form, or $\fa$ has a global separatrix transverse
to each sphere $S^{2n-1}(r), \, 0 < r \leq 1$, or $n$ is odd. In
the case of a non-integrable holomorphic one-form, if $n=2m+1$ is
odd there is no holomorphic one-form $\Omega$ such that the
distribution $\Ker (\Omega)=\{\Omega=0\}$ is transverse to the
sphere $S^{4m+1}(1) \subset \bc ^{2m+1}$ (\cite{Ito-ScarduaMMJ}).
If $n$ is even, we have a typical example
(\cite{Ito-Scarduatopology}). In $\bc^{2m}$ with coordinates
$(z_1,...,z_{2m})$ we define $\Omega=\sum\limits_{j=1}^{2m}
(z_{2j} dz_{2j-1} - z_{2j-1} dz_{2j})$. This holomorphic one-form
satisfies the strong non-integrability condition $\Omega \wedge d
\Omega \ne 0$, off the origin, and $\Ker(\Omega)$ is transverse to
the sphere $S^{4m-1}(1) \subset \bc^{2m}$.

On the other hand, in the case of  a holomorphic vector fields on
$\bc^n, \, n \geq 2$, a Poincar\'e-Bendixson type theorem has been
proved in \cite{[Ito]} stating that if  $Z$ is  a holomorphic
vector field in a neighborhood $U$ of the closed unit disk
$\ov{D^{2n}} \subset \bc ^n, n \geq 2$, such that the
corresponding foliation $\fa(Z)$ the foliation defined by the
solutions of $Z$ is transverse to  $S^{2n-1}(1) =
\partial \ov{D^{2n}}$, then $Z$ has only
one singular point, say $p$, in $\ov{D^{2n}}$ and the index of $Z$
at $p$ is equal to one. Moreover, each solution $L$ of $Z$ which
crosses $S^{2n-1}(1)$ tends to the unique singular point $p$ of
$Z$ in the disk, {\it i.e.}, $p$ is in the closure of $L$.
Furthermore, the restriction $\fa(Z)\big|_{\ov{D^{2n}}\setminus
\{p\} }$ is $C^w$-conjugate to the foliation
$\fa(Z)\big|_{S^{2n-1}(1)} \times (0,1]$ of $\ov{D^{2n}}\setminus
\{p\}$.

This paper is dedicated to the proof of non-existence in case of
generic foliations, where ``generic" stands for a generic set
tangencies with the spheres centered at the origin as follows. Let
$\fa(\Omega)$ be as above and denote by $\vr$ the distance
function with respect to the origin $0 \in \bc^n$.
\begin{Definition}[Morse type]
{\rm The foliation $\fa=\fa(\Omega)$ is of {\it Morse type} if for
each leaf $L\in \fa (\Omega)$, each critical point $p \in L$ of
$\vr\big|_L$ on $L$ is  nondegenerate.  $\Omega$ is of {\it Morse
type} if $\fa(\Omega)$ is.}
\end{Definition}

Our main result is the following non-existence theorem:

\begin{Theorem}
\label{Theorem:main} There is no  Morse type holomorphic foliation
of codimension one  $\fa$ in a neighborhood $U$ of the closed unit
disk $\ov{D^{2n}}\subset \bc^n, \, n \geq 3$ such that $\fa$ is
transverse to the boundary sphere $S^{2n-1}(1)=\partial
\ov{D^{2n}}$.
\end{Theorem}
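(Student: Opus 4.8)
The plan is to study the leafwise gradient of the squared distance and extract a contradiction from the Poincar\'e--Hopf theorem on the disk. Write $\vr(z)=|z|^2$, so $\nabla\vr(p)=2p$ is the outward radial field, and let $X$ be the real-analytic vector field obtained by orthogonally projecting $\nabla\vr$ onto the leaf tangent space $\Ker(\Om_p)\subset T_p\bc^n$. It is tangent to $\fa$, defined on $U\setminus\Sing(\fa)$, and its zero set there is precisely the variety of contacts $M$, i.e. the set of critical points of the restrictions $\vr|_L$. First I would record the boundary behaviour: transversality of $\fa$ to $S^{2n-1}(1)$ says that at each $p$ in the sphere the radial direction $2p$ is \emph{not} normal to the leaf $L_p$, whence $X(p)\neq0$ and $\langle X(p),2p\rangle=|X(p)|^2>0$. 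Thus $X$ points strictly outward along $\po\ov{D^{2n}}$; since the contacts form a closed set disjoint from $S^{2n-1}(1)$, the set $M\cap\ov{D^{2n}}$ is compact and contained in the open disk.

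Because $X$ points outward on $S^{2n-1}(1)$, its Gauss map there is homotopic to that of the outer normal, and Poincar\'e--Hopf, applied to $\ov{D^{2n}}$ with small tubular neighbourhoods of $\Sing(\fa)$ removed, yields
$$\sum_i \mathrm{ind}(X,M_i)+\sum_j \mathrm{ind}(X,\Sigma_j)=\chi(\ov{D^{2n}})=1,$$
where the $M_i$ are the components of the contact variety, the $\Sigma_j$ are the components of $\Sing(\fa)$ inside the disk, and the second sum is the total index of $X$ on their links. The next step is to analyse the first sum. By the Morse type hypothesis each critical point of $\vr|_L$ is nondegenerate, so by the implicit function theorem the contacts persist and move real-analytically with the (one complex, i.e. two real dimensional) transverse parameter; hence $M$ is a $2$-dimensional real-analytic surface transverse to $\fa$, and its components $M_i$ are closed surfaces along which the leafwise Hessian of $\vr$ is nondegenerate of constant Morse index $\lambda_i$. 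Each $M_i$ is therefore a Morse--Bott critical manifold of $X$ and contributes $(-1)^{\lambda_i}\chi(M_i)$, in particular an integer $\equiv\chi(M_i)\bmod 2$.

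The decisive point is orientability. Transversality of $M_i$ to $\fa$ gives a bundle isomorphism $TM_i\cong\nu_{\fa}|_{M_i}$, where $\nu_{\fa}=T\bc^n/T\fa$ is the normal bundle of the holomorphic foliation, a \emph{complex} line bundle and hence canonically oriented; so $TM_i$ is oriented, $M_i$ is an orientable closed surface, and $\chi(M_i)$ is even. (Strict plurisubharmonicity of $|z|^2$ on the complex $(n-1)$-dimensional leaves, i.e. the Andreotti--Frankel bound, moreover pins $0\le\lambda_i\le n-1$.) Consequently every contact component contributes an even index, and the part of the total index coming from $M$ is even.

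Since the grand total is the odd number $1$, the remaining contribution from $\Sing(\fa)$ would have to be odd, and ruling this out is the heart of the matter and the place where $n\ge 3$ is used. For $n\ge 3$ the set $\Sing(\fa)$ has complex codimension $\ge 2$, so $M$ (being only $2$-real-dimensional) can be kept disjoint from it, and the normal data of $\Sing(\fa)$ are complex of rank $\ge 2$, so the local indices of $X$ on its links are governed by Chern-type, hence even, quantities; in the excluded case $n=2$ the singularities are isolated and may contribute an odd index. Adding an even $M$-contribution to an even $\Sing$-contribution gives an even total, contradicting $\chi(\ov{D^{2n}})=1$, and so no such foliation exists. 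I expect the rigorous bookkeeping of the singular contribution --- establishing that $\sum_j\mathrm{ind}(X,\Sigma_j)$ is even and that $M$ avoids $\Sing(\fa)$, both relying on $\codim_{\bc}\Sing(\fa)\ge 2$ --- together with the precise Morse--Bott index computation to be the main obstacle; the orientability argument above is the clean structural input that forces the parity.
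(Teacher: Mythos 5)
Your setup is the same as the paper's (the projected gradient $t_{\fa(\Omega)}(\vec n)$, the contact variety $\Sigma$ of real dimension two by the Morse hypothesis, and the observation that transversality forces the field to point strictly outward on $S^{2n-1}(1)$), but the Poincar\'e--Hopf bookkeeping you build on it has a genuine gap, and it sits exactly where your own structural claims fail. The components of the contact variety are \emph{not} closed surfaces disjoint from $\Sing(\fa)$: $\Sigma$ is a cone with vertex at the singular point of the foliation (the minima of $\vr\big|_L$ on leaves accumulating at the singularity converge to it), so your decomposition of the zero set of $X$ into compact critical surfaces $M_i$ plus separate singular components $\Sigma_j$ is unavailable --- the two families meet, and dimension counting cannot ``keep $M$ disjoint'' from $\Sing(\fa)$ because both are specific analytic sets, not perturbable cycles. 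Consequently the Morse--Bott contribution $(-1)^{\lambda_i}\chi(M_i)$ and the orientability/parity argument (which needs $M_i$ closed) do not apply. The second unproven step is the claimed evenness of the singular contribution via ``Chern-type quantities'': no computation is offered, and this is not a side issue --- if $\Sigma=\emptyset$ your argument yields no contradiction at all unless that evenness is established, so the nonemptiness of $\Sigma$ (which you never prove) and the singular index are jointly the whole content. Note also that your stated role for $n\ge 3$ is off: the paper uses it through Malgrange's theorem \cite{[Malgrange]} to produce a holomorphic first integral near the singularity, which is what guarantees $\Sigma\ne\emptyset$, together with the prior result of \cite{Ito-Scarduatopology} that transversality forces a \emph{unique, nondegenerate} singular point in the disk --- a fact you would need but do not invoke.

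Ironically, the failure of your ``closed surface'' claim is the paper's proof. Once one knows (Theorem~\ref{Theorem:caracterizationMorsetype}) that $\Sigma\setminus\{0\}$ is two-dimensional and transverse to $\fa$, the projection of $\vec n$ onto $T\Sigma$ never vanishes, so $\vr\big|_{\Sigma}$ has no interior critical points: $\Sigma$ carries an unbounded cone structure. The component $\Sigma_0^*$ containing the index-zero contact points near the origin (which exist by Malgrange plus Lemma~\ref{Lemma:indexzero}) must therefore pierce $S^{2n-1}(1)$, and a contact point on the boundary sphere contradicts transversality directly --- no index formula, no parity, no analysis of $\Sing(\fa)$ beyond its uniqueness. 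If you want to salvage your route, you would have to prove the evenness of the local index of $X$ along the link of the singular point and handle the conical critical set honestly; the paper's argument shows this machinery can be bypassed entirely.
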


A {\it linear foliation} is defined by a  one-form $\Omega
_A=\sum\limits_{i,j=1}^n a_{ij} z_i dz_j$, where
$A=(a_{ij})_{i,j=1}^n$ is a symmetric matrix. Regarding
transversality of linear foliations with spheres we prove in
\cite{Ito-ScarduaMMJ} that a linear foliation  $\fa(\Omega_A)$ on
$\bc^n$ is not transverse to the sphere $S^{2n-1}(1)$. Moreover,
$\fa(\Omega_A)$ is transverse to the sphere $S^{2n-1}(1)$ off the
singular set $\sing(\fa(\Omega_A))\cap S^{2n-1}(1)$ if and only if
$\fa(\Omega_A)$ is a product ${\mathcal L}_\la \times \bc^{n-2}$
for some linear foliation ${\mathcal L}_\la  \colon x\,dy - \la
y\,dx = 0$, in the Poincar\'e domain on $\bc^2$.

The following result then describes the variety of contacts for a
generic linear foliation.

\begin{Theorem}
\label{Theorem:linearapprox} Any linear foliation  $\fa$  on
$\bc^n$ can be arbitrarily approximated by Morse type linear
foliations. In particular, $\fa$ is not transverse to
$S^{2n-1}(1)$ if $n \geq 3$.  For a dense Zariski subset of linear
foliations on $\bc^n$ the variety of contacts of its elements is a
union of $n$ complex lines through the origin.
\end{Theorem}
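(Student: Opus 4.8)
The plan is to exploit the fact that a \emph{linear foliation} has a polynomial first integral. Since $A$ is symmetric, $\Omega_A=\sum_{i,j}a_{ij}z_i\,dz_j=dQ$ with $Q(z)=\tfrac12 z^TAz$, so $d\Omega_A=0$ and the leaves of $\fa(\Omega_A)$ are the connected components of the level sets $\{Q=c\}$, with $\sing(\fa)=\Ker A$ (only the origin when $A$ is invertible). First I would locate the variety of contacts $M$. A point $p$ lies on $M$ exactly when $\vr|_{L_p}$ is critical at $p$, i.e. when $d\vr_p$ lies in the real span of $dQ_p$ and $d\ov Q_p$; writing this out gives $\ov z=\kappa\,Az$ for some $\kappa\in\bc$, that is $M=\{z:Az=\la\ov z\text{ for some }\la\in\bc\}$. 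This set is invariant under $z\mapsto cz$ for $c\in\bc^*$, so $M$ is automatically a cone that is a union of complex lines; and conjugating $Az=\la\ov z$ yields $\ov A A z=|\la|^2 z$, so each contact line is an eigenline of the Hermitian nonnegative matrix $\ov A A=A^*A$ with eigenvalue a squared singular value of $A$.

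Next I would pin down $M$ through the Autonne--Takagi factorization $A=U\Sigma U^T$, with $U$ unitary and $\Sigma=\mathrm{diag}(\sigma_1,\dots,\sigma_n)$, $\sigma_j\ge 0$. Its columns satisfy $A\ov{u_k}=\sigma_k u_k$, so each $p_k:=\ov{u_k}$ solves $Ap_k=\sigma_k\ov{p_k}$ and the line $\bc\,p_k$ lies in $M$. When $A$ is invertible with pairwise distinct singular values, the eigenspaces of $A^*A$ are complex lines, each coinciding with one $\bc\,p_k$ and lying entirely in $M$; hence $M=\bigcup_{k=1}^{n}\bc\,\ov{u_k}$ is a union of exactly $n$ complex lines through the origin. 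The matrices with $\det A\neq 0$ and simple singular values form the complement of a proper real-algebraic subset (the vanishing locus of $\det A$ together with the discriminant of the characteristic polynomial of $A^*A$), hence a Zariski-dense set, which proves the last assertion.

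The technical core is the Morse criterion. Fix $p=\ov{u_k}$, so $Ap=\la\ov p$ with $\la=\sigma_k$, and take a holomorphic parametrization $\Phi$ of $L=\{Q=Q(p)\}$ with $\Phi(0)=p$ and $\partial_a\Phi(0)=e_a$ an orthonormal basis of $T_pL=p^\perp$. Because $\Phi$ is holomorphic and $\vr=\lg z,z\rg$, the real Hessian of $\vr\circ\Phi$ at $0$ splits into a $(1,1)$-block $H^{(1,1)}_{a\ov b}=\lg e_a,e_b\rg=\delta_{ab}$ (positive definite) and a $(2,0)$-block $H^{(2,0)}_{ab}=\lg\partial_a\partial_b\Phi(0),p\rg$. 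Differentiating $Q(\Phi(w))\equiv Q(p)$ twice and using $Ap=\la\ov p$ yields $H^{(2,0)}=-\tfrac1\la S$, where $S_{ab}=e_a^TAe_b$ is the bilinear form $A$ restricted to $p^\perp$. Nondegeneracy of the critical point is then equivalent to $\det\!\big(I-|\la|^{-2}S\ov S\big)\neq 0$, i.e. to $|\la|^2=\sigma_k^2$ not being an eigenvalue of $S\ov S$. Evaluating $S$ in the Takagi basis $\{\ov{u_j}\}_{j\neq k}$ of $p^\perp$ gives $S=\mathrm{diag}(\sigma_j)_{j\neq k}$, so the eigenvalues of $S\ov S$ are $\{\sigma_j^2\}_{j\neq k}$, and $p$ is nondegenerate iff $\sigma_k$ is simple. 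Thus $\fa(\Omega_A)$ is of Morse type precisely when the singular values of $A$ are pairwise distinct. The main obstacle is exactly this computation: organizing the second-order geometry of $\{Q=c\}$ (its second fundamental form) and reducing the degeneracy condition, via the Takagi factorization, to coincidences of singular values.

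Finally I would assemble the three assertions. Since symmetric matrices with $\det A\neq 0$ and simple singular values are dense, every linear foliation is a limit of Morse type linear foliations, which is the first assertion. For the second, transversality to the compact sphere $S^{2n-1}(1)$ is an open condition on $A$, while by the Main Theorem (Theorem~\ref{Theorem:main}) no Morse type foliation with $n\ge 3$ is transverse to it; hence the open set of transverse linear foliations is disjoint from the dense set of Morse type ones and must be empty, so no linear foliation is transverse to $S^{2n-1}(1)$ when $n\ge 3$.
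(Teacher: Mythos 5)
Your proof is correct, and its skeleton coincides with the paper's: the contact equation $Az=\la\ov z$, identification of the contact variety with eigenlines of a Hermitian matrix built from $A$ (your $\ov A A$ versus the paper's $B=A^{-1}\cdot\ov{A^{-1}}$, whose eigenvalues are the reciprocal squared singular values, so the simplicity conditions agree), density of the good matrices, and then non-transversality via Theorem~\ref{Theorem:main} together with openness of transversality to the compact sphere (the paper leaves this last gluing implicit in ``follows from Propositions~\ref{Proposition:distincteigenvalueslines} and \ref{Propostion:zariskidense}''; you spell it out). The genuine difference is in the key technical step. The paper gets the union-of-$n$-lines statement through a chain of lemmas (each contact point spans a complex line inside $\Sigma$; a combination $T_1w^{(1)}+T_2w^{(2)}$ with $|\mu(w^{(1)})|\ne|\mu(w^{(2)})|$ is never a contact; $B$ is positive Hermitian), but it never proves in general that simple eigenvalues of $B$ force the Morse property: the Hessian of $\vr\big|_{L}$ at a contact point is computed only for the diagonal forms $f_\la=\sum\la_j z_j^2$ in Section~\ref{section:examples}, and the density proof of Proposition~\ref{Propostion:zariskidense} simply asserts the characterization. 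Your Takagi-based computation ($H^{(2,0)}=-S/\la$ with $S$ the restriction of the bilinear form, degeneracy iff $|\la|^2$ is an eigenvalue of $S\ov S$, and $S$ diagonal in the Takagi basis) supplies exactly this missing general step; indeed, since $A=U\Sigma U^T$ and the unitary substitution $z=\ov U w$ turns $z^TAz$ into $\sum\sigma_j w_j^2$ while preserving $\vr$, your argument simultaneously recovers the paper's diagonal example and shows it is the general case. In short: the paper's lemma chain locates the contact lines with minimal computation, while your route yields the sharp criterion (Morse type iff the singular values of $A$ are pairwise distinct) and makes the density argument airtight. One shared caveat: the simplicity condition involves $\ov A$, so the ``Zariski'' complement is real-algebraic rather than complex-algebraic --- a looseness present in the paper's Proposition~\ref{Propostion:zariskidense} as well.
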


\section{Pugh's generalization of Poincar\'e-Hopf theorem}
\label{section:Pugh}

In this section we recall Pugh's generalization of Poincar\'e
index formula (\cite{Pugh}) and we given an example which we use
in \S ~\ref{section:proofofmain} Lemma~\ref{Lemma:indexk}.

Let $X$ be a $C^\infty$ vector field on a manifold $M$, such that:
\begin{enumerate}

\item  $X$ has no singularity on $\partial M$.

\item The singularities of $X$ in $M$ are of Morse type.

\item $X$ has a generic contact set with $\partial M$.

\end{enumerate}

Let $n_i$ denote the number of zeros of $X$ of Morse index $i$,
where the Morse index is the number of eigenvalues of $DX(p)$
having negative real part. Let also $\Sigma(X):=\sum\limits_{i}
(-1)^i n_i$. Then we have
\[
\Sigma(X)=\mathcal X(M, \partial M) + \sum\limits_{i \geq 1}
\mathcal X (R^i _-, \Gamma^i)
\]
where
\begin{enumerate}
\item $R^i _-$ is the i-codimensional exit region.

\item $\Gamma^i$ is the boundary of $R_- ^i$.

\item The Euler characteristic is defined as
\[
\chi\mathcal (M,
\partial M)=\sum\limits_{i} \dim H_i(M,\partial M),
\]
 in terms of relative homology.

\end{enumerate}

If $M=D^2$, the unit disk in $\bc$,  we have $\mathcal X
(M,\partial M)=1$, $R_- ^i$ is the set of arcs on $\partial
D^2=S^1$ where $X$ points out of $D^2$, $\Gamma^1$ is the set of
tangency points, $R^2 _-$ is the set of points on $\partial
R^1_-=\Gamma^1$ where $X$ points out of $R_- ^1$. Because of the
generic contact between $X$ and $\partial M$, there are no
inflection points on $S^1$ and therefore, exit and entrance arcs
alternate around $S^1$. Therefore $\mathcal X(R^1 _-)$ is half the
number of tangency points, i.e., $\mathcal X (\Gamma^1)/ 2$. $R^2
_-$ is easily seen to be the set of interior tangencies and
$\Gamma^2 = \emptyset$. We have then
\[
\mathcal X(M,\partial M) + \sum\limits_{i \geq 1} \mathcal X (R^i
_-, \Gamma^i)= 1 + \mathcal X(R ^1_-, \Gamma^1) + \mathcal X
(R^2_-, \emptyset)
\]
\[  = 1 + \mathcal X(\Gamma ^1) / 2  -
\mathcal X (\Gamma^1) + i = 1 + \frac{ 2 i - \mathcal X
(\Gamma^i)}{2} = 1 + \frac{2i - (i + e)}{2} = 1 + \frac{i  -
e}{2}.
\] We obtain therefore the formula below quoted as {\em Poincar\'es
formula}.
\[
I=1 + \frac{ i - e}{2}
\] where $I$ is  the topological index of the vector field, $i$ is the
number of interior tangencies
and $e$ is the number of exterior tangencies.

\begin{Example}
{\rm Consider the case of an isolated Morse singularity of index
$i \geq 1$. Let $f(x)=- x_1 ^2 - ...-x_i ^2 + x_{i+1} ^2 +...+ x_n
^2$ be the Morse function of Morse index $i$ at $0$. We denote by
$\grad(f)=-2x_1 \frac{\partial}{\partial x_1} -...-  2x_i
\frac{\partial}{\partial x_i} + 2 x_{i+1} \frac{\partial}{\partial
x_{i+1}} + ...+ 2x_n \frac{\partial}{\partial x_n}$ the gradient
vector field of $f$ and by $\vec n=\sum\limits_{i=1}^n x_i
\frac{\partial}{\partial x_i}$the radial vector field on $\mathbb
R^n$. Take $r>0$, we investigate the gradient $\grad(f)$ in the
disc $D^n (r)$. Then we have $R_- ^1=\{x \in S^{n-1}(r) \big| \lg
\grad(f)(x), \vec n (x) \rg >0\}$ and $\Gamma^1 = \{ x \in
S^{n-1}(r) \big| \lg \grad(f)(x), \vec n (x) \rg =0\}$. Therefore,
$R_- ^1$ is homotopic to $D^i \times S^{n-i - 1}\cong S^{n-i-1}$,
where $D^i$ is the $i$-dimensional disc, $S^{n-i-1}$ the boundary
of $D^{n-i}$ and $\cong$ means homotopic to. On the other hand
$\Gamma^1$ is homotopic to $S^{i-1} \times S^{n-i-1}$. Then we can
calculate $\chi(R_- ^1, \Gamma^1)$ as follows:

\[
\chi(R_- ^1, \Gamma^1) = \chi(R_- ^1) - \chi(\Gamma^1) =
\chi(S^{n-i-1}) - \chi(S^{i-1}).\chi(S^{n-i-1}).
\]

We will apply Pugh's generalization of Poincar\'e-Hopf theorem to
the case of leaves of dimension $n=2(m-1)$:

\[
(-1)^i . 1 = 1 + \begin{cases} 2 - 2 \times 2, \,  \, \, \text{\rm
if $i$ is odd}
\\  0 - 0 \times 0, \, \, \,  \text{\rm if $i$ is even.}
\end{cases}
\]

}
\end{Example}

\section{The variety of contacts}

\label{section:varietyofcontacts}

Let $\Omega= \sum\limits_{j=1}^n f_j(z)dz_j$ be a holomorphic
one-form in a neighborhood $U$ of the origin $0\in\bc^n, n \geq
2$. We do not assume the integrability condition.
 We define the {\it gradient of\/} $\Omega$ as
the complex $C^\infty$ vector field $\grad(\Omega) = \sum_{j=1}^n
\ov{f_j(z)}\, \frac{\po}{\po z_j}\,\cdot$
 By construction, $\grad(\Omega)$ is orthogonal to the
distribution $\Ker (\Omega)$,  and also $\Omega\cdot\grad(\Omega)
= \sum\limits_{j=1}^n |f_j(z)|^2$. Let also $\vr$ be a real
analytic function $\vr \colon \bc^n \to \re$ and $\vr\ne 0$ in
$\bc^{n}\setminus \{0\}$.
 The {\it gradient vector field} of $\vr$ is defined by
 \begin{equation}
\grad(\vr)=2\big[\sum\limits_{j=1}^n (\ov{(\frac{\partial
\vr}{\partial z_j})}\frac{\partial}{\partial z_j} +
\ov{(\frac{\partial \vr}{\partial \ov
z_j})}\frac{\partial}{\partial \ov z_j}\big]= 4
\Re(\sum\limits_{j=1}^n (\ov{(\frac{\partial \vr}{\partial
z_j})}\frac{\partial}{\partial z_j}).
 \end{equation}

 Given $r>0$ let $S^{2n-1}(0;r)
\subset \bc^n$ be the sphere given by $\sum\limits_{j=1}^n |z_j|^2
= r^2$ and  $\vec R = \sum\limits_{j=1}^n z_j\frac{\po}{\po z_j}$
the {\it {\rm(}complex{\rm)} radial vector field}. If we write
$z_j = x_j + \sqrt{-1}y_j$ in standard euclidian coordinates then
the real normal vector field to $S^{2n-1}(0;r)$ is $\vec n :=
\sum\limits_{j=1}^n \big(x_j\,{\po}/{\po x_j} + y_j\,{\po}/{\po
y_j}\big)$. Write  $f_j(z)= g_j(x,y) + \sqrt{-1}h_j(x,y), \, 1
\leq j \leq n$. We have the real representations of these gradient
vector fields:

\begin{equation}
\grad(\Omega) = \frac{1}{2}
\{\sum_{j=1}^n\big(\left(g_j\,{\po}/{\po x_j} - h_j\,{\po}/{\po
y_j}\right) - \sqrt{-1} \sum_{j=1}^n\big(\left(h_j\,{\po}/{\po
x_j} + g_j\,{\po}/{\po y_j}\right)\}
\end{equation}
\begin{equation}
 = \frac{1}{2} \{ X -
\sqrt{-1} JX\}.
\end{equation}
where $X=\sum_{j=1}^n (g_j\,{\po}/{\po x_j} - h_j\,{\po}/{\po
y_j}),$ and $J$ is the canonical complex structure of $\bc^n$.
Also we have
\begin{equation}
\grad(\vr)= 4\Re\{\frac{1}{4}(\sum\limits_{j=1}^n (\frac{\partial
\vr}{\partial x_j}\frac{\partial}{\partial x_j} + \frac{\partial
\vr }{\partial y_j}\frac{\partial }{\partial y_j}) -
\sqrt{-1}(\sum\limits_{j=1}^n ( - \frac{\partial \vr}{\partial
y_j}\frac{\partial}{\partial x_j} + \frac{\partial \vr }{\partial
x_j}\frac{\partial}{\partial y_j}))\}
\end{equation}
\begin{equation}
= \Re( \vec n_\vr - \sqrt{-1} J\vec n_\vr) =  \vec n_\vr
\end{equation}
where by definition we have
\begin{equation}
\vec n _ \vr:=\sum\limits_{j=1}^n (\frac{\partial \vr}{\partial
x_j}\frac{\partial}{\partial x_j} + \frac{\partial \vr }{\partial
y_j}\frac{\partial }{\partial y_j}) \end{equation}

\begin{Definition}[Variety of contacts]
{\rm The {\it variety of contacts $\Sigma=\Sigma(\Omega, \vr)$ of
the one-form $\Omega$ and the function $\vr$}, i.e., the variety
of contacts $\Sigma$ of the two foliations $\fa(\Omega)$ and
$\fa(d \vr)$, is defined by the real analytic equations:
$\grad(\vr)(p) \in \grad(\Omega)(p)$ at $p \in \Sigma$, that is,
$\vec n_\vr = a X + b JX,$ \,  for some $a, b\in \re$ at $p \in
\Sigma$. }
\end{Definition}

\begin{Remark}
{\rm With the above definitions we have:

\begin{enumerate}

\item  Consider the distributions $\fa(\Omega)$ defined by
$\Omega$ and $\fa(d \vr)$ defined by the real one-form $d \vr$.
The relation between two tangent spaces of these distributions is
$T_p(\fa(\Omega))\subset T_p(\fa(d \vr))$ at $p \in \Sigma$.

\item Put $\vec R_\vr = \frac{1}{2}(\vec n_\vr - \sqrt{-1} J \vec
n_\vr) =2 \sum\limits_{j=1}^n \ov{(\frac{\partial \vr}{\partial
z_j})} \frac{\partial}{\partial z_j}$. Then, $\Sigma(\Omega, \vr)$
is defined by the  equations:

\begin{equation}
\label{equation:3.7} \frac{f_1(z)}{\frac{\partial\vr}{\partial
z_1}}=\frac{f_2(z)}{\frac{\partial\vr}{\partial
z_2}}=...=\frac{f_n(z)}{\frac{\partial\vr}{\partial z_n}}
\end{equation}

\end{enumerate}
\begin{proof}
 (1) Given $v \in T_p(\fa(\Omega))$ we have $<v, X>=0$ and $<v, JX>=0$.
This implies $<v,\vec n  _ \vr>= a<v,X> + b<v, JX>=0$.

(2) By definition and the above equations we have
\begin{equation}
\vec R_\vr = \frac{1}{2}( a X + b JX - \sqrt{-1}(a JX - b X)) = (
a + \sqrt{-1} b ) \frac{1}{2} ( X - \sqrt{-1} JX)_ = ( a +
\sqrt{-1} b ) . \grad (\Omega)
\end{equation}

Thus $\vec R_\vr = \lambda . \grad(\Omega)$ for some $\lambda \in
\bc$, i.e., (\ref{equation:3.7}).

\end{proof}

}
\end{Remark}

\subsection{The distance function}
\label{subsection:distancefunction}  Let now
$\vr(z)=\sum\limits_{j=1}^n |z_j|^2$ be the distance function from
the origin to the point $z=(z_1,...,z_n)\in \bc^n$. Then
\begin{equation}
\grad(\vr) =  \vec n _\vr= 2 \vec n.
\end{equation}

where
\begin{equation}
\vec n := \sum\limits_{j=1}^n \big(x_j\,{\po}/{\po x_j} +
y_j\,{\po}/{\po y_j}\big)
\end{equation}

In particular, the variety of contacts $\Sigma$ is given by
\begin{equation}
\frac{f_1(z)}{\ov{z_1}}=\frac{f_2(z)}{\ov{z_2}}=...=\frac{f_n(z)}{\ov{z_n}}
\end{equation}

\subsection{Homogeneous case}
\label{subsection:homogeneouscase}
\begin{Proposition}
Let $\Omega=\sum\limits_{j=1}^n f_j(z) dz_j$ be an integrable
homogeneous polynomial one-form of degree $k \geq 1$, in a
neighborhood $U$ of the closed unit disk $\ov{D^{2n}}\subset
\bc^n, \, n \geq 3$. Assume that the singularity of $\Omega$
inside the disk is the origin. Let $\Sigma$ be the set of contact
points between spheres $S^{2n-1}(r), \, r >0$ and the foliation
$\fa(\Omega)$ defined by $\Omega$. We denote by $\vec R =
\sum\limits_{j=1}^n z_j \frac{\partial}{\partial z_j}$ the complex
radial vector field. Then $\Sigma$ is a non-empty $\vec
R$-invariant set.
\end{Proposition}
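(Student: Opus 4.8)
The plan is to read off membership in $\Sigma$ as a $\bc$-proportionality, obtain $\vec R$-invariance for free from homogeneity, and then force a contact point by a fixed-point count on $\Pn{n-1}$. Recall that, with the distance function, the contact equations of $\Sigma$ read $f_1(z)/\ov z_1=\dots=f_n(z)/\ov z_n$; away from $0$ this says that the vectors $(z_1,\dots,z_n)$ and $(\ov{f_1(z)},\dots,\ov{f_n(z)})$ are $\bc$-linearly dependent, equivalently that Cauchy--Schwarz holds with equality in $|\sum_j z_jf_j(z)|^{2}\le(\sum_j|z_j|^{2})(\sum_j|f_j(z)|^{2})$, so $\Sigma$ is a closed real-analytic set. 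For invariance, substitute $tz$ with $t\in\bc^{*}$: homogeneity gives $\ov{f_j(tz)}=\ov t^{\,k}\,\ov{f_j(z)}$, so if $z_j=\la\,\ov{f_j(z)}$ for all $j$ then $tz_j=(t\la\,\ov t^{\,-k})\,\ov{f_j(tz)}$ for all $j$; hence $tz\in\Sigma$, and $\Sigma$ is a cone invariant under the flow of $\vec R$. It therefore suffices to exhibit a single $z_0\ne 0$ in $\Sigma$.

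For this I projectivize. Since the only singularity of $\Omega$ is the origin and $\Omega$ is homogeneous, the common zero set $\{f_1=\dots=f_n=0\}$ is the cone $\{0\}$, so the Gauss map $F\colon\Pn{n-1}\to\Pn{n-1}$, $F([z])=[f_1(z):\dots:f_n(z)]$, is an everywhere-defined morphism with $F^{*}h=k\,h$ on $H^{2}(\Pn{n-1};\bz)$, where $h$ is the hyperplane class. Writing $c$ for complex conjugation and $\Phi=c\circ F$, that is $\Phi([z])=[\,\ov{f_1(z)}:\dots:\ov{f_n(z)}\,]$, the proportionality above says exactly that the contact directions of $\Sigma$ are the fixed points of $\Phi$. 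Thus $\Sigma\ne\{0\}$ is equivalent to $\Phi$ having a fixed point, and since $\Phi$ is a continuous self-map of the compact manifold $\Pn{n-1}$ I may apply the Lefschetz theorem.

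Because conjugation satisfies $c^{*}h=-h$, one gets $\Phi^{*}(h^{i})=(-k)^{i}h^{i}$, and hence the Lefschetz number
\[
L(\Phi)=\sum_{i=0}^{n-1}(-k)^{i}.
\]
This is nonzero whenever $k\ge 2$, and also when $k=1$ and $n$ is odd; in all those cases $\Phi$ has a fixed point and $\Sigma\ne\{0\}$. The single remaining case $k=1$, $n$ even is exactly the one in which $L(\Phi)=0$, and it is also exactly the regime of the non-integrable example $\sum_j (z_{2j}\,dz_{2j-1}-z_{2j-1}\,dz_{2j})$ of \cite{Ito-Scarduatopology}, whose contact set is empty. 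Here the integrability hypothesis is indispensable: an integrable homogeneous one-form of degree $k=1$ with isolated singularity is, up to a constant, a linear foliation $\fa(\Omega_A)$ with $A$ symmetric, and for these the non-transversality to $S^{2n-1}(1)$, i.e. $\Sigma\ne\{0\}$, was already established in \cite{Ito-ScarduaMMJ} for $n\ge 3$; this is precisely where the hypothesis $n\ge 3$ enters.

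I expect the main obstacle to be this borderline case $k=1$, $n$ even, which the purely topological count cannot detect and for which one must feed in integrability to exclude the antisymmetric models; for $n=2$ it genuinely fails, since the Poincar\'e-domain foliations $\mathcal{L}_\la$ are transverse to $S^{3}$, consistent with $L(\Phi)=1-k=0$. A secondary point needing care is the verification that the isolated-singularity hypothesis makes $F$ a genuine morphism rather than merely a rational map, as otherwise the Lefschetz count would have to be corrected by a contribution from the indeterminacy locus.
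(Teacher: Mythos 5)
Your proof is correct, and it reaches the non-emptiness of $\Sigma$ by a genuinely different route than the paper. The paper's argument is local and analytic: since $\Sing(\Omega)=\{0\}$ has complex codimension $n\geq 3$, Malgrange's theorem gives a holomorphic first integral $f$ near the origin with $\Omega=g\,df$, and a point of a small sphere $S^{2n-1}(\epsilon)$ where $|f|$ attains its maximum is a contact point; the $\vec R$-invariance computation (identical to yours, via $\ov{f_j(p e^T)}=\lambda\,\ov{e^{kT}}e^{-T}\,p_j e^T$) then propagates contact points to every sphere. You replace Malgrange plus the maximum argument by a global Lefschetz count for the anti-holomorphic map $\Phi=c\circ F$ on $\Pn{n-1}$, with $L(\Phi)=\sum_{i=0}^{n-1}(-k)^i$; your identification of fixed points of $\Phi$ with contact directions is correct (off the origin the contact condition $2z_j=\lambda\,\ov{f_j(z)}$ forces $\lambda\neq 0$, and the isolated singularity makes $F$ a genuine morphism with $F^*h=kh$, so the count needs no indeterminacy correction). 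Your approach buys something real: for $k\geq 2$, and for $k=1$ with $n$ odd, you obtain $\Sigma\neq\{0\}$ without using integrability at all, and the vanishing $L(\Phi)=0$ precisely at $k=1$, $n$ even conceptually locates the paper's non-integrable example $\sum_j(z_{2j}dz_{2j-1}-z_{2j-1}dz_{2j})$. The costs appear in that borderline case: you assert without proof that an integrable linear one-form with isolated singularity comes from a symmetric matrix --- this is true and deserves a line: if the constant $2$-form $d\Omega$ is nonzero, then $\Omega\wedge d\Omega=0$ forces either $\Omega\equiv 0$ (when $d\Omega$ has rank $\geq 4$) or, after a linear change of coordinates, $\Omega=f_1\,dz_1+f_2\,dz_2$ (rank $2$), whose singular set has codimension $\leq 2<n$, contradicting the isolated singularity --- and you then import the nontrivial linear non-transversality theorem of \cite{Ito-ScarduaMMJ}, whereas the paper's Malgrange argument is self-contained and handles all $k\geq 1$ uniformly, which is exactly where its hypothesis $n\geq 3$ enters (codimension of the singular set $\geq 3$); in your proof $n\geq 3$ is instead consumed only in the residual case $k=1$, $n$ even, consistent with the genuine failure at $n=2$ that you correctly note for the Poincar\'e-domain foliations $\mathcal{L}_\lambda$.
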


\begin{proof}
The assumption on the singular set of $\Omega$ implies that
\linebreak $\cod _\bc (\Sing(\Omega))_0 =n\geq 3$. According to
Malgrange's theorem \cite{[Malgrange]}, there exists a holomorphic
first integral $f\colon V \to \bc$ of $\Omega$ in a neighborhood
$V$ of the origin; $\Omega\big|_V= gdf$, where $g$ is a non-zero
holomorphic function on $V$. Since a point of
$S^{2n-1}(\epsilon)\subset V$ where a maximal value of
$|f|\big|_{S^{2n-1}(\epsilon)}$ is attained is a contact point,
$\Sigma\big|_V$ is non-empty. Take a point $p=(p_1,...,p_n)\in
\Sigma$, the orbit of $\vec R$ passing through $p$ is $p. e^T=(p_1
. e^T,...,p_n. e^T), \, T \in \bc$. We will prove that $\ov{f_j(p.
e^T)} = \lambda(T). p_j.e^T, \, \lambda(T)\in \bc, \, 1 \leq j
\leq n$. Since $p \in \Sigma$ we have $\ov{f_j(p)}=\lambda. p_j,
\lambda \in \bc, \, 1 \leq j \leq n$. For any $j, \, \ov{f_j(p .
e^T)}=(\lambda. \ov{e^{kT}}. e^{-T}).p_j e^T$. Put
$\lambda(T)=\lambda. \ov{e^{kT}}. e^{-T}$, then $p. e^T \in
\Sigma$.

\end{proof}

\section{The projected gradient vector field}
\label{section:projectedgradient} Let $\Omega$ be an integrable
holomorphic one-form with $\sing(\Omega)=\{0\}$ and $\vr$ be the
distance function as above. Under the same notation of
\S\ref{section:varietyofcontacts}, we define the projection
$t_{\fa(\Omega)}(\vec n)$ of $\vec n$ onto $T(\fa(\Omega))$ along
$\grad(\Omega)$ as:
\begin{equation}
t_{\fa(\Omega)}(\vec n)=\vec n - \big[ <\vec n,
\frac{X}{||X||}>\frac{X}{||X||} + <\vec n, \frac{JX}{||JX||}>
\frac{JX}{||JX||} \big ]
\end{equation}
where $<,>$ is the natural inner product of $\re^{2n}\simeq \bc
^n$ and $||X||$ is the norm of $X$.

In the case of  a holomorphic vector field $Z$ in $\bc^n, n \geq
2$, X. Gomez-Mont, J. Seade and A. Verjovsky (\cite{GSV}) defined
the real analytic vector field $r_Z= -1\ov{<Z, \vec R>_\bc}. Z \in
Z $ and investigated the properties of $r_Z$, where $<Z, \vec
R>_\bc$ is the canonical hermitian inner product of $\bc^n$
between $Z$ and $\vec R$. Our definition is a version for
holomorphic foliations of codimension one. In the following, we
explain some properties of $t_{\fa(\Omega)}(\vec n)$.

\begin{Proposition}
\label{Proposition:propertiesofprojectedvector} We have:
\begin{enumerate}

\item $t_{\fa(\Omega)}(\vec n)\in T(\fa(\Omega)).$

\item $\Sing(t_{\fa(\Omega)}(\vec n))=\Sigma.$

\item $\grad(\vr \big|_L)= 2 t_{\fa(\Omega)}(\vec n).$

 \item Away from the variety of contacts $\Sigma$, the
distributions $\fa(\Omega)$ and $\fa(d\vr)$ meet transversally.

\item Let $L$ be an integral submanifold  of $\fa(\Omega)$. The
vector field $t_{\fa(\Omega)}(\vec n)$ is transversal to each
level surface of $\vr\big|_{L}$ on $L\setminus (L\cap \Sigma)$.
\end{enumerate}

\end{Proposition}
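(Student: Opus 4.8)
The plan is to recognize the bracketed expression in the definition of $t_{\fa(\Omega)}(\vec n)$ as the orthogonal projection of $\vec n$ onto the normal plane of the foliation, and then to read off all five items from this single geometric picture. First I would record the two elementary facts about the real structure: since $J$ is an isometry of $\re^{2n}\simeq\bc^n$ that is skew-adjoint for $\langle\,,\rangle$, one has $\langle X, JX\rangle = 0$ and $\|X\| = \|JX\|$ at every nonsingular point (where $\|X\|^2 = \sum_j |f_j|^2 \ne 0$). Hence $\{X/\|X\|,\, JX/\|JX\|\}$ is an orthonormal basis of the real $2$-plane $\operatorname{span}_\re(X, JX)$, and the bracket in the definition is exactly the orthogonal projection of $\vec n$ onto this plane. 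By part (1) of the Remark in \S\ref{section:varietyofcontacts}, $\operatorname{span}_\re(X, JX)$ is precisely the orthogonal complement of $T(\fa(\Omega))$; therefore $t_{\fa(\Omega)}(\vec n) = \vec n - \operatorname{proj}_{T(\fa(\Omega))^\perp}(\vec n)$ is the orthogonal projection of $\vec n$ onto $T(\fa(\Omega))$. This gives item (1) at once.

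For item (2), $t_{\fa(\Omega)}(\vec n)(p) = 0$ holds iff $\vec n(p)$ equals its projection onto $\operatorname{span}_\re(X, JX)$, i.e. iff $\vec n(p) = aX + bJX$ for some $a,b\in\re$. Since $\grad(\vr) = \vec n_\vr = 2\vec n$, this is exactly the defining equation $\vec n_\vr = aX + bJX$ of the variety of contacts $\Sigma$; hence $\Sing(t_{\fa(\Omega)}(\vec n)) = \Sigma$. For item (3), I would invoke the standard fact that the gradient of the restriction of a function to a submanifold, taken with respect to the induced metric, is the tangential component of the ambient gradient, namely $\grad(\vr|_L) = \operatorname{proj}_{T L}(\grad\vr)$. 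With $TL = T(\fa(\Omega))$ and $\grad(\vr) = 2\vec n$ this yields $\grad(\vr|_L) = 2\,t_{\fa(\Omega)}(\vec n)$.

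Items (4) and (5) are then consequences. For (4), the level foliation $\fa(d\vr)$ is a real hyperplane field with $T(\fa(d\vr)) = \Ker(d\vr) = \{v : \langle \vec n, v\rangle = 0\}$; since it has real codimension one, transversality with $T(\fa(\Omega))$ at $p$ holds iff $T_p(\fa(\Omega)) \not\subseteq T_p(\fa(d\vr))$, i.e. iff $\vec n(p) \notin T_p(\fa(\Omega))^\perp = \operatorname{span}_\re(X, JX)$, which by item (2) is exactly the condition $p \notin \Sigma$. For (5), by item (3) the field $t_{\fa(\Omega)}(\vec n)$ is a positive multiple of $\grad(\vr|_L)$, which is normal to the regular level sets of $\vr|_L$; by item (2) it is nonzero on $L\setminus(L\cap\Sigma)$, so there the level sets are regular and $t_{\fa(\Omega)}(\vec n)$, being a nonzero normal vector, is transverse to them.

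The computations are all routine; the only point that needs care is the verification that the bracket really is the orthogonal projection onto $T(\fa(\Omega))^\perp$ rather than some oblique projection. This rests on $X\perp JX$ together with $\|X\| = \|JX\|$ and on the identification $T(\fa(\Omega))^\perp = \operatorname{span}_\re(X, JX)$; once these are in place, every item drops out of the single identity $t_{\fa(\Omega)}(\vec n) = \operatorname{proj}_{T(\fa(\Omega))}(\vec n)$ together with the naturality of the gradient under restriction to the leaf used in item (3).
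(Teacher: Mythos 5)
Your proposal is correct, and for the central item (3) it takes a genuinely different route from the paper. The paper proves (3) by a local computation: it writes $\Omega\big|_U = G\,dF$ near a nonsingular point, represents the leaf as a graph $z_n = z_n(z_1,\dots,z_{n-1},c)$ via the implicit function theorem, and verifies the identity term by term using the relations $f_j + f_n\,\partial z_n/\partial z_j = 0$. You instead observe once and for all that, since $\langle X, JX\rangle = 0$ and $\|X\| = \|JX\|$ (with $\|X\|^2 = \sum_j |f_j|^2 \ne 0$ off $\Sing(\Omega)$), the bracket in the definition is the orthogonal projection onto $\operatorname{span}_{\re}(X, JX) = T(\fa(\Omega))^{\perp}$, so $t_{\fa(\Omega)}(\vec n)$ is the orthogonal projection of $\vec n$ onto $T(\fa(\Omega))$, and then invoke the naturality of the gradient under restriction, $\grad(\vr\big|_L) = \operatorname{proj}_{T_pL}(\grad \vr) = 2\, t_{\fa(\Omega)}(\vec n)$. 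This is shorter, coordinate-free, and it also settles a point the paper leaves implicit: with respect to which metric $\grad(\vr\big|_L)$ is taken. The paper's chart computation reads as a flat gradient in the chart coordinates $(x_j, y_j)_{j \le n-1}$, whereas your projection identity shows the statement holds for the intrinsic gradient of the induced metric, which is what items (4) and (5) actually need. One small step you should make explicit: the Remark in \S 3 only yields the inclusion $\operatorname{span}_{\re}(X, JX) \subseteq T(\fa(\Omega))^{\perp}$; equality requires the dimension count ($T(\fa(\Omega))$ has real dimension $2n-2$, and $X$, $JX$ are nonzero and mutually orthogonal off $\Sing(\Omega)$, hence linearly independent). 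For items (1) and (2) your computations essentially coincide with the paper's (note that the paper's verification of (1) tacitly uses $\langle X, JX\rangle = 0$, which you state and justify via skew-adjointness of $J$); for (4) and (5) the paper merely says they follow from (2) and (3), and your arguments supply exactly the one-line justifications: a hyperplane field is transverse to $T(\fa(\Omega))$ at $p$ iff $T_p(\fa(\Omega)) \not\subseteq \Ker(d\vr)$, i.e.\ iff $\vec n(p) \notin \operatorname{span}_{\re}(X, JX)$, and on $L \setminus (L \cap \Sigma)$ the field $t_{\fa(\Omega)}(\vec n)$ is a nonvanishing positive multiple of $\grad(\vr\big|_L)$, hence transverse to the (regular) level sets of $\vr\big|_L$.
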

\begin{proof}

\noindent (1) \,   $<t_{\fa(\Omega)}(\vec n), X>=<\vec n, X> -
\{<\vec n, \frac{X}{||X||}><\frac{X}{||X||}, X> + <\vec n,
\frac{JX}{||JX||}><\frac{JX}{||JX||}, X>\}=0 $ Similarly,
$<t_{\fa(\Omega)}(\vec n), JX>=0$.

\noindent (2) Given $p \in \Sing(t_{\fa(\Omega)}(\vec n))$ we have
$ \vec n \in \grad(\Omega)$ and then $p \in \Sigma$. By its turn,
$p \in \Sigma$ implies $\vec n = a X + b JX, a, b, \in \mathbb R$
and then
\begin{equation}
\begin{cases}
<\vec n, \frac{X}{||X||}>\frac{X}{||X||} = a X \\
<\vec n, \frac{JX}{||JX||}>\frac{JX}{||JX||}=bJX
\end{cases}
\end{equation}
This finally implies $t _{\fa(\Omega)} (\vec n)=0$.

(3) Let $p$ be a point of $\ov{D^{2n}}\setminus \{0\}$. Since
$\Omega$ is non-singular at $p$, there exists a neighborhood $U$
of $p$ such that $\Omega\big|_U$ writes $\Omega\big|_U= G dF$ on
$U$ where $G$ and $F$ are holomorphic functions on $U$ and $G$ is
a non-zero holomorphic function on $U$. The leaf $L$ through $p$
is given by $L=\{ z \in U \big| F(z)=F(p)=c\}$. We can assume that
$f_n(z)\ne 0$ on $U$ because $\Omega$ is non-singular on $U$. By
the implicit function theorem, we can take a local coordinates
$(z_1,...,z_{n-1}, z_n(z_1,...,z_{n-1}, c))$ to represent the leaf
$L$. Therefore, we have the relations:
\begin{equation}
f_j(z) + f_n (z) \frac{\partial z_n}{\partial z_j} =0, \, 1 \leq j
\leq n-1,
\end{equation}

i.e.,
\begin{equation}
\begin{cases} g_j + g_n \frac{\partial x_n}{\partial x_j} - h_n
\frac{\partial y_n}{\partial x_j}=0 \\
h_j + h_n \frac{\partial x_n}{\partial x_j} + g_n \frac{\partial
y_n}{\partial x_j}=0, \, 1 \leq j \leq n-1
\end{cases}
\end{equation}

Then we have the gradient $\grad(\vr \big|_L)$ of
$\vr\big|_L(z)=\sum\limits_{j=1}^{n-1} (x_j ^2 + y_j ^2) \, + (x_n
^2 + y_n ^2)$ and then

\begin{equation}
\grad(\vr\big|_L) = 2\{\sum\limits_{j=1}^{n-1} ( x_j + x_n
\frac{\partial x_n}{\partial x_j} + y_n \frac{\partial
y_n}{\partial x_j})\frac{\partial}{\partial x_j} +
\sum\limits_{j=1}^{n-1} ( y_j + x_n \frac{\partial x_n}{\partial
y_j} + y_n \frac{\partial y_n}{\partial y_j})
\frac{\partial}{\partial y_j}\}
\end{equation}

On the other hand, directly calculating, we get
$(t_{\fa(\Omega)}(\vec n))= \frac{1}{2} \grad( d \vr\big|_L):$

\[
(t_{\fa(\Omega)}(\vec n)) = \vec n - \frac{\lg \vec n, X
\rg}{||X||^2} [ \sum\limits_{j=1}^{n-1} ( g_j
\frac{\partial}{\partial x_j} - h_j \frac{\partial}{\partial
y_j})] - \frac{\lg \vec n, JX \rg}{||JX||^2}
\big[\sum\limits_{j=1}^{n-1}( h_j \frac{\partial}{\partial x_j} +
g_j \frac{\partial}{\partial x_j})\big]
\]
\[
- \frac{\lg \vec n, X \rg}{||X||^2} \{ g_n [
\sum\limits_{j=1}^{n-1} (\frac{\partial x_n}{\partial x_j}
\frac{\partial}{\partial x_j} + \frac{\partial x_n}{\partial y_j}
\frac{\partial}{\partial y_j})]  - h_n[ \sum\limits_{j=1}^{n-1}
(\frac{\partial y_n}{\partial x_j}\frac{\partial}{\partial x_j} +
\frac{\partial y_n}{\partial y_j}\frac{\partial}{\partial y_j})]\}
\]
\[
 - \frac{\lg \vec n, JX
\rg}{||JX||^2}\{ h_n [ \sum\limits_{j=1}^{n-1}(\frac{\partial
x_n}{\partial x_j} \frac{\partial}{\partial x_j} + \frac{\partial
x_n}{\partial y_j}\frac{\partial}{\partial y_j}) ] + g_n[
\sum\limits_{j=1}^{n-1} ( \frac{\partial y_n}{\partial
x_j}\frac{\partial}{\partial x_j} + \frac{\partial y_n}{\partial
y_j} \frac{\partial}{\partial y_j})] \}
\]
\[
 =\sum\limits_{j=1}^{n-1} (x_j + x_n \frac{\partial x_n}{\partial
 x_j} + y_n \frac{\partial y_n}{\partial x_j})
 \frac{\partial}{\partial x_j} + \sum\limits_{j=1}^{n-1}( y_j +
 x_n \frac{\partial x_n}{\partial y_j} + y_n \frac{\partial
 y_n}{\partial y_j})\frac{\partial}{\partial y_j} = \frac{1}{2}
 \grad(\vr\big|_L).
 \]

Item (4)  follows immediately from (2). (5) follows from (3).

\end{proof}

\subsection{The complex projected tangential vector field}
\label{subsection:complexprojectedtangential} The {\it complex
projected tangential vector field} $\mathcal T _ {\fa(\Omega)}
(\vec R) \in T (\fa(\Omega))$ is defined as
\begin{equation}
\mathcal T _ {\fa(\Omega)} (\vec R):= \vec R - \mu .\grad
(\Omega)= \frac{1}{2} [ t _ {\fa (\Omega)} ( \vec n ) - \sqrt{-1}
J  t_ {\fa (\Omega)} (\vec n)]
\end{equation}
where
\begin{equation}
\mu = \frac{ \lg \vec R, \grad (\Omega) \rg } { || \grad(\Omega)||
^2}
\end{equation}

Then we have

\begin{equation}
\Sigma = \sing(\mathcal T _{\fa (\Omega)} (\vec R)) = \{ z \in \bc
^n : \, \vec R = \mu . \grad(\Omega)\}
\end{equation}

\section{The variety of contacts of a Morse foliation}
\label{section:varietycontactsMorse}

Let $\Omega$ be an integrable holomorphic one-form with
$\Sing(\Omega)=\{0\}$. Under the same notation of
\S~\ref{section:varietyofcontacts}, we give a characterization of
Morse type foliations.

\begin{Proposition}
\label{Proposition:(1)implies(2)} Assume that each critical point
$p \in L\cap \Sigma$ of $\vr\big|_L$ is non-degenerate. Then the
real dimension of $\Sigma\setminus \{0\}$ is two and
$\Sigma\setminus \{0\}$ is transverse to the foliation
$\fa(\Omega)$.
\end{Proposition}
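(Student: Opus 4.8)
My strategy is to analyze $\Sigma \setminus \{0\}$ locally at a critical point $p$ using the nondegeneracy hypothesis, realizing $\Sigma$ as (essentially) the critical locus of the family of functions $\vr|_L$ as $L$ ranges over the leaves of $\fa(\Omega)$. The key structural fact I would use is Proposition~\ref{Proposition:propertiesofprojectedvector}: we have $\Sing(t_{\fa(\Omega)}(\vec n)) = \Sigma$ and $\grad(\vr|_L) = 2\,t_{\fa(\Omega)}(\vec n)$, so a point $p \in \Sigma \setminus \{0\}$ is exactly a critical point of $\vr|_L$ on the leaf $L = L_p$ through $p$. Thus $\Sigma$ is the union, over all leaves, of the critical sets of the restricted distance functions, and the nondegeneracy assumption says each such critical point is a \emph{nondegenerate} (Morse) critical point of $\vr|_L$ on the $2(n-1)$-real-dimensional leaf $L$.

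\medskip\noindent\textbf{The dimension count.}
First I would fix $p \in \Sigma \setminus \{0\}$ and choose the local holomorphic submersion $F$ with $\Omega|_U = G\,dF$ from the proof of Proposition~\ref{Proposition:propertiesofprojectedvector}(3), so that the leaves are the level sets $\{F = c\}$, a holomorphic foliation of $U$ by complex $(n-1)$-manifolds. The variety $\Sigma$ near $p$ is cut out by the condition $\grad(\vr|_{\{F=c\}}) = 0$ along the leaf containing the point, i.e.\ by the vanishing of $t_{\fa(\Omega)}(\vec n)$, which is a system of $2(n-1)$ real equations (the real and imaginary parts, or equivalently the $(n-1)$ complex equations of~\eqref{equation:3.7} in local leaf coordinates). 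Since the ambient real dimension is $2n$, a transverse intersection would give $\dim_{\re}\Sigma = 2n - 2(n-1) = 2$. The content of the proposition is therefore that this system \emph{does} cut out transversally, which is precisely where nondegeneracy enters.

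\medskip\noindent\textbf{Where nondegeneracy does the work; the main obstacle.}
The plan is to show the $2(n-1)$ defining equations are independent at $p$ by splitting the tangent directions into those \emph{along} the leaf $L_p$ and those \emph{transverse} to it. In the leaf directions, the Jacobian of the map $p \mapsto \grad(\vr|_L)(p)$ restricted to the single leaf $L_p$ is exactly the Hessian of the Morse function $\vr|_{L_p}$ at its critical point $p$; by the nondegeneracy hypothesis this $(2n-2)\times(2n-2)$ Hessian is invertible. This already forces $\Sigma$ to meet each leaf $L$ in isolated points and gives that the $(n-1)$ leafwise directions are accounted for. The remaining, and I expect hardest, step is to control how the critical point moves as the leaf parameter $c$ varies: one applies the holomorphic/real-analytic implicit function theorem to the equation $t_{\fa(\Omega)}(\vec n) = 0$, using invertibility of the leafwise Hessian to solve for the critical point as a function of the transverse coordinate. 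This produces a single $2$-real-dimensional (indeed, one-complex-parameter) family, giving $\dim_{\re}(\Sigma \setminus \{0\}) = 2$ and simultaneously exhibiting $\Sigma \setminus \{0\}$ as a submanifold transverse to $\fa(\Omega)$, since by construction it meets each leaf in a discrete set (a single point locally) and its tangent space complements $T(\fa(\Omega))$.

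\medskip\noindent\textbf{Finishing the transversality claim.}
To conclude transversality to $\fa(\Omega)$ cleanly, I would verify that $T_p\Sigma \cap T_p(\fa(\Omega)) = 0$: any vector $v \in T_p\Sigma$ tangent to the leaf $L_p$ would have to lie in the kernel of the leafwise Hessian of $\vr|_{L_p}$ (differentiating the relation $\grad(\vr|_L)|_\Sigma = 0$ along a curve in $\Sigma \cap L_p$), which by nondegeneracy is trivial. Hence $T_p\Sigma$ projects isomorphically onto the $2$-real-dimensional transverse (normal) direction to the leaf, giving both $\dim_\re T_p\Sigma = 2$ and $T_p\Sigma \oplus T_p(\fa(\Omega)) = T_p\bc^n$, which is exactly transversality. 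The delicate point throughout is keeping track of the real-analytic dependence on the leaf parameter and confirming that the implicit-function solution varies over a genuinely two-real-dimensional base; this is the step I would write out with the most care.
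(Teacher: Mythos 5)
Your proposal is correct and takes essentially the same route as the paper: both realize $\Sigma$ near $p$ as the zero locus of the $2n-2$ real equations expressing vanishing of the leafwise gradient of $\vr$ in foliated (distinguished / first-integral) coordinates, note that the relevant Jacobian block is precisely the leafwise Hessian of $\vr\big|_L$, which is invertible by the nondegeneracy hypothesis, and apply the implicit function theorem to parametrize $\Sigma$ by the two transverse real coordinates, which yields both $\dim_{\re}(\Sigma\setminus\{0\})=2$ and transversality to $\fa(\Omega)$. Your closing verification that $T_p\Sigma \cap T_p(\fa(\Omega))=0$ via the trivial kernel of the leafwise Hessian merely makes explicit what the paper's parametrization argument leaves implicit.
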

\begin{proof}
 First we note the existence of a
critical point of $\vr\big|_{L^\prime}$ for any leaf $L^\prime$
close enough to $L$ at $p$, i.e., passing through a neighborhood
of $p\in L \cap \Sigma$, where $p$ is a critical point of
$\vr\big|_L$, is proved by Poincar\'e-Hopf theorem (or Pugh's
generalization of Poincar\'e-Hopf theorem). Take a distinguished
coordinate neighborhood $(w_1,...,w_{n-1},w_n)\in U$ for
$\fa(\Omega)$ where $p\in U$ corresponds to the origin and
$L^\prime \cap U=\{w_n=c\}$. Then $\Sigma \cap U$ is defined by
the equations:
\[
\frac{\partial \vr}{\partial w_1}=0,...,\frac{\partial
\vr}{\partial w_{n-1}}=0
\]
i.e., $\frac{\partial \vr}{\partial u_j}=0, \, j=1,...,2n-2$,
where we write $w_j=u_{2j-1} + \sqrt{-1} u_{2j}, \, 1 \leq j \leq
n$, by the real coordinate. Since $p$ is non-degenerate on $L$,
\[
\det(\frac{\partial ^2 \vr}{\partial u_i \partial u_j})_{1 \leq i,
j \leq 2n-2}
\]
 is different from $0$. By the Implicit function theorem, $\Sigma$
 is parametrized by $u_{2n-1}$ and $u_{2n}: \,
 (u_1(u_{2n-1},u_{2n}),...,u_{2n-2}(u_{2n-1},u_{2n}), u_{2n-1},
 u_{2n})$. Then the dimension of $\Sigma\setminus \{0\}$ is two
 and $\Sigma-\{0\}$ is
 transverse to $\fa(\Omega)$.

\end{proof}

\begin{Proposition}
\label{Proposition:(2)implies(1)} If the real dimension of
$\Sigma\setminus \{0\}$ is two and $\Sigma \setminus \{0\}$ is
transverse to the foliation $\fa(\Omega)$, then each critical
point $p \in \Sigma\cap L$ of $\vr\big|_L$ on the leaf $L$ passing
through $p$ is non-degenerate.

\end{Proposition}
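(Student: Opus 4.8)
The plan is to argue by contraposition, reversing the implicit function theorem computation of Proposition~\ref{Proposition:(1)implies(2)}. First I would fix $p\in\Sigma\cap L$ and, exactly as there, choose a distinguished holomorphic coordinate system $(w_1,\dots,w_n)$ for $\fa(\Omega)$ near $p$, with $p$ corresponding to $0$ and the leaves given by $\{w_n=\mathrm{const}\}$; writing $w_j=u_{2j-1}+\sqrt{-1}\,u_{2j}$ the variety of contacts is $\Sigma\cap U=\{\partial\vr/\partial u_1=\dots=\partial\vr/\partial u_{2n-2}=0\}$, and non-degeneracy of $p$ as a critical point of $\vr|_L$ is precisely the condition $\det\big(\partial^2\vr/\partial u_i\,\partial u_j\big)_{1\le i,j\le 2n-2}\neq 0$. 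I denote this tangential Hessian by $H$ and set $\Phi=(\partial\vr/\partial u_1,\dots,\partial\vr/\partial u_{2n-2})\colon U\to\re^{2n-2}$, so that $\Sigma=\Phi^{-1}(0)$ and $H$ is the Jacobian of $\Phi$ in the leaf (tangential) directions $u'=(u_1,\dots,u_{2n-2})$. The hypothesis that $\Sigma\setminus\{0\}$ is two-dimensional and transverse to $\fa(\Omega)$ translates, since the leaves have real codimension two, into the statement that the projection of $\Sigma$ onto the transverse coordinate plane $u''=(u_{2n-1},u_{2n})$ is a local diffeomorphism; equivalently $\Sigma$ is locally a graph $u'=U(u'')$ over the leaf space.

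I would then try to run the implicit function theorem backwards: it suffices to prove that $0$ is a regular value of $\Phi$ at $p$, that is, that $H$ is nonsingular, for then $\Sigma$ is automatically the transverse graph and $p$ is non-degenerate. Differentiating the identity $\Phi(U(u''),u'')\equiv 0$ gives $H\cdot\partial_{u''}U+B=0$, where $B$ collects the mixed second derivatives $\partial^2\vr/\partial u_i\partial u''$; hence the columns of $B$ lie in the image of $H$ and $\rank[H\,|\,B]=\rank H$. Consequently, if $H$ were singular then $\Phi$ would be singular at $p$, and, choosing $0\neq\eta\in\ker H$, the vector $(\eta,0)$, which is tangent to the leaf since its $u''$-component vanishes, would lie in $\ker D\Phi_p$. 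Thus a degeneracy always produces a leaf-tangent direction annihilated by $D\Phi$, and the question becomes whether $T_p\Sigma$ can avoid this direction while $\Sigma$ remains a smooth two-manifold.

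The hard part is precisely this last point: being a smooth two-manifold of the correct dimension does not, by soft arguments alone, force $0$ to be a regular value, since a degenerate but isolated critical point is a priori compatible with the formal dimension count, so I expect the real obstacle to be ruling out the degenerate-yet-transverse scenario, and this is where the complex (Kähler) structure must enter. Concretely, on the leaf $\vr|_L=\|z(w')\|^2$ with $z$ holomorphic, so $\partial\vr/\partial w_i=\sum_k(\partial_{w_i}z_k)\,\ov{z_k}=:G_i$, and the Levi form $\mathcal{L}_{ij}=\partial^2\vr/\partial w_i\partial\ov{w_j}=\lg\partial_i z,\partial_j z\rg$ is positive definite. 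Writing $Q_{ij}=\partial^2\vr/\partial w_i\partial w_j$, the complexification of $H$ is $\left(\begin{smallmatrix}Q&\mathcal{L}\\ \ov{\mathcal{L}}&\ov Q\end{smallmatrix}\right)$, whose vanishing determinant is the focal-point condition that the shape operator $A_p$, which anticommutes with $J$ and therefore has eigenvalues in pairs $\pm\sigma_k$ (the pairing underlying the parity computation in the Example of \S\ref{section:Pugh}), possesses a unit eigenvalue. Using the holomorphic implicit function theorem to solve $G=0$ for $\ov{w'}=\Theta(w',w_n,\ov{w_n})$ with $\Theta$ holomorphic, I would show that a degeneracy forces the real slice $\ov{w'}=\Theta(w',\dots)$ to fail to be a smooth graph over $w_n$: the positive definiteness of $\mathcal{L}$ together with the holomorphy of $\Theta$ makes the contact locus branch into several sheets, or become tangent to a leaf, at $p$, exactly as one checks on the model $z(w_1,w_2)=(1+w_1^2/2+w_2,\,w_1)$. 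Either outcome contradicts the hypothesis that $\Sigma\setminus\{0\}$ is a smooth two-manifold transverse to $\fa(\Omega)$, whence $H$ is nonsingular and $p$ is non-degenerate.
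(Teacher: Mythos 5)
You set up exactly the paper's framework: the distinguished coordinates, the map $\Phi=(\partial\vr/\partial u_1,\dots,\partial\vr/\partial u_{2n-2})$ whose zero set is $\Sigma\cap U$, the graph parametrization $u'=U(u'')$ of $\Sigma$ over the transverse coordinates (this is where two-dimensionality plus transversality enter), and the identity $H\cdot\partial_{u''}U+B=0$ obtained by differentiating $\Phi(U(u''),u'')\equiv 0$, which forces $\rank[H\,|\,B]=\rank H$. Up to this point you coincide with the paper, which derives the same relations by evaluating $d(\partial\vr/\partial u_i)$ on the graph tangent vectors $\vec v_{2n-1},\vec v_{2n}$. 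The paper then finishes in one line: it reads the hypothesis as saying that $T_p\Sigma$ is cut out by the linearized defining equations, $T_p\Sigma=\{\vec v : d(\partial\vr/\partial u_i)(\vec v)=0,\ 1\le i\le 2n-2\}$, so this kernel is two-dimensional, the differentials $d(\partial\vr/\partial u_i)$ have rank $2n-2$, and by the rank identity $H$ itself has rank $2n-2$, i.e.\ $p$ is non-degenerate.

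Your proposal refuses this identification, correctly observing that $T_p\Sigma\subseteq\ker D\Phi_p$ can a priori be strict for a set that merely happens to be a smooth two-manifold: indeed, for $0\ne\eta\in\ker H$ one has $\ker D\Phi_p\supseteq(\ker H\times\{0\})\oplus T_p\Sigma$, so a singular $H$ is consistent with the linear algebra alone. But you then try to close the resulting gap with a complex-geometric argument --- Levi form, complexified Hessian, shape operator anticommuting with $J$ --- that is never carried out. The final paragraph is a program, not a proof: ``I would show that a degeneracy forces the real slice \ldots\ to fail to be a smooth graph,'' ``the contact locus branch into several sheets, or become tangent to a leaf,'' and ``exactly as one checks on the model'' are unverified claims; the dichotomy (branching or tangency) is not established, and the model example is not actually checked. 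So the very step you isolate as the hard part --- excluding a degenerate critical point of $\vr\big|_L$ whose contact variety is nonetheless a smooth two-manifold transverse to $\fa(\Omega)$ --- is left open, and as written the proposal does not prove the proposition. To repair it, either adopt the paper's reading of the hypothesis (the tangent space of $\Sigma$ equals the kernel of the $d(\partial\vr/\partial u_i)$, whence these have rank $2n-2$ and your rank identity $\rank[H\,|\,B]=\rank H$ immediately gives $\det H\ne 0$), or genuinely carry out the analytic exclusion you only sketch.
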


\begin{proof}
Take a distinguished coordinate neighborhood $U$,
$(w_1,...,w_{n-1}, w_n)$, at $p$ such that the leaf $L \cap U$ of
$\fa(\Omega)$ passing through $p$ is defined by $\{w_n=0\}$. Using
the real coordinate $(u_1,...,u_{2n-2}, u_{2n-1}, u_{2n}): \, w_j
= u_{2j-1} + \sqrt{-1} u_{2j}, \, 1, \leq j \leq n, \, \Sigma$ is
parametrized in $U$ by $u_{2n-1}$ and $u_{2n}$:
\[
\Sigma \cap U = \{(u_1(u_{2n-1}, u_{2n}), ..., u_{2n-2}(u_{2n-1},
u_{2n}),u_{2n-1}, u_{2n}) \big| u_{2n-1}, u_{2n} \}. \]

 Then the tangent space of $T\Sigma\big|_U$ of $\Sigma \cap U$ is
 generated by $\vec v_{2n-1}$ and $ \vec v_{2n}$:

 \[
 \vec v_{2n-1} = \sum\limits_{j=1}^{2n-2} \frac{\partial
 u_j}{\partial u_{2j-1}} \frac{\partial}{\partial u_j} +
 \frac{\partial}{\partial u_{2n-1}}
 \]
 \[
 \vec v_{2n} = \sum\limits_{j=1}^{2n-2} \frac{\partial
 u_j}{\partial u_{2n}}\frac{\partial}{\partial u_j} +
 \frac{\partial}{\partial u_{2n}}
 \]

On the other hand, $\Sigma$ is defined by the critical points of
the distance function $\vr$ as follows:
\[
\Sigma\cap U \{ w \in U \big| \frac{\partial \vr}{\partial
u_1}=0,...,\frac{\partial \vr}{\partial u_{2n-2}}=0\}
\]

Then the tangent space $T \Sigma\big|U$ is defined by $\{
d(\frac{\partial \vr}{\partial u_1})=0,...,d(\frac{\partial
\vr}{\partial u_{2n-2}})=0\}: $
\[
T\Sigma\big|_U = \{ \vec v \in T\mathbb R ^{2n} \big| d (
\frac{\partial \vr}{\partial u_i})(\vec v)=0, \, 1 \leq i \leq
2n-2\}
\]

Since $d(\frac{\partial \vr}{\partial u_i})(\vec v _k)=0$ for
$k=2n-1, 2n, \, 1 \leq i \leq 2n -2, $ we get the following
relations:
\[
\begin{pmatrix}
\frac{\partial ^2 \vr}{\partial u_1 \partial u_1} \ldots
\frac{\partial ^2 \vr}{\partial u_{2n-2} \partial u_1} \\
\hdots \ldots \hdots\\
 \frac{\partial ^2 \vr}{\partial u_1
\partial u_n} \ldots \frac{\partial ^2 \vr}{\partial u_{2n-2}
\partial u_{2n-2}}
\end{pmatrix}
\begin{pmatrix}
\frac{\partial u_1}{\partial u_k} \\
\hdots \\
\frac{\partial u_{2n-2}}{\partial u_k}
\end{pmatrix} +
\begin{pmatrix}
\frac{\partial ^2 \vr}{\partial u_k \partial u_1}\\ \hdots
\\ \frac{\partial ^2 \vr}{\partial u_{k} \partial u_{2n-2}}
\end{pmatrix}
=
\begin{pmatrix}
0 \\ \hdots \\
0
\end{pmatrix}
\]

Then the fact that the rank of $\{ d(\frac{\partial \vr}{\partial
u_1}),...,d(\frac{\partial \vr}{\partial u_{2n-2}})\}$ is $2n-2$
means that the rank of the $(2n-2)\times (2n-2)$ matrix
\[
\begin{pmatrix}
\frac{\partial ^2 \vr}{\partial u_1 \partial u_1} \ldots
\frac{\partial ^2 \vr}{\partial u_{2n-2} \partial u_1} \\
\hdots \ldots \hdots\\
 \frac{\partial ^2 \vr}{\partial u_1
\partial u_n} \ldots \frac{\partial ^2 \vr}{\partial u_{2n-2}
\partial u_{2n-2}}
\end{pmatrix}
\]
is $2n-2$. Therefore the critical point $p $ is non-degenerate.

\end{proof}

Summarizing
Proposition~\ref{Proposition:propertiesofprojectedvector} (3) and
Propositions~\ref{Proposition:(1)implies(2)} and
\ref{Proposition:(2)implies(1)} we have the following theorem:

\begin{Theorem}[Characterization of Morse type foliations]
\label{Theorem:caracterizationMorsetype} Given a holomorphic
integrable one-form $\Omega$ with $\Sing(\Omega)=\{0\}$,  in a
neighborhood of the disk $\ov{D^{2n}}$, the following conditions
are equivalent:

\begin{itemize}
\item[{\rm(i)}] $\fa(\Omega)$ is of Morse type, i.e., each
critical point $p \in \Sigma \cap L$ of $\vr\big|_L$ on each leaf
$L$ is nondegenerate.

\item[{\rm(ii)}]  $\Sigma - \{0\}$ has real dimension two and is
transverse to the foliation $\fa(\Omega)$.

\item[{\rm(iii)}] The singularities of $t _ {\fa (\Omega)} ( \vec
n )$ on each leaf are nondegenerate.

\end{itemize}

\end{Theorem}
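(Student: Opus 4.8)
The plan is to assemble the theorem directly from the three preceding results, so that essentially nothing new has to be computed. The equivalence (i)$\Leftrightarrow$(ii) is already the combined content of Proposition~\ref{Proposition:(1)implies(2)} (which gives (i)$\Rightarrow$(ii)) and Proposition~\ref{Proposition:(2)implies(1)} (which gives (ii)$\Rightarrow$(i)); both are established by the Implicit Function Theorem argument applied, in a distinguished chart, to the system $\partial\vr/\partial u_1=\cdots=\partial\vr/\partial u_{2n-2}=0$ cutting out $\Sigma$, where nondegeneracy of the critical point is exactly invertibility of the $(2n-2)\times(2n-2)$ Hessian block. Thus the only link that remains to be established is (i)$\Leftrightarrow$(iii).

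For that I would rely entirely on Proposition~\ref{Proposition:propertiesofprojectedvector}, items (1)--(3). By item (2) the zero set of $t_{\fa(\Omega)}(\vec n)$ equals $\Sigma$, and since this field is tangent to $\fa(\Omega)$ by item (1) it restricts to a vector field on each leaf $L$ whose singularities are precisely the points of $\Sigma\cap L$, i.e. the critical points of $\vr\big|_L$. By item (3) we have the pointwise identity $t_{\fa(\Omega)}(\vec n)\big|_L=\tfrac12\,\grad(\vr\big|_L)$, where $\grad(\vr\big|_L)$ is the gradient of the restricted distance function taken with respect to the metric induced on $L$. Hence on each leaf $t_{\fa(\Omega)}(\vec n)$ is, up to the harmless factor $\tfrac12$, the intrinsic gradient field of a function, and nondegeneracy of its singularities becomes a purely local question about gradient fields.

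The local fact I would invoke is the standard one: at a zero $p$ of a gradient vector field $\grad(g)$, the linearization $D(\grad g)(p)$ is represented (in an orthonormal frame, or in any chart since $dg(p)=0$) by the Hessian matrix $\big(\partial^2 g/\partial u_i\,\partial u_j\big)(p)$. Therefore $D\big(t_{\fa(\Omega)}(\vec n)\big)(p)$ is nonsingular if and only if the Hessian of $\vr\big|_L$ at $p$ is nondegenerate, which is exactly the statement that $p$ is a nondegenerate critical point of $\vr\big|_L$. Reading this equivalence over all leaves and all critical points yields (i)$\Leftrightarrow$(iii), and together with (i)$\Leftrightarrow$(ii) this closes the cycle.

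The step I expect to require the most care is this last one: making precise that the ambient, extrinsically defined projection $t_{\fa(\Omega)}(\vec n)$ agrees, \emph{as a vector field on the leaf}, with the intrinsic gradient of $\vr\big|_L$, so that its linearization really is the leafwise Hessian rather than some projection-corrected version of it. This is precisely what Proposition~\ref{Proposition:propertiesofprojectedvector}(3) supplies, namely that the tangential projection of the ambient gradient equals the intrinsic gradient of the restriction; but one should emphasize that the identity $t_{\fa(\Omega)}(\vec n)\big|_L=\tfrac12\,\grad(\vr\big|_L)$ is an equality of vector fields on all of $L$, not merely at $p$, which is what legitimizes differentiating to obtain the Hessian of the restriction and passing freely between the three formulations.
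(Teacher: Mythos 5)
Your proposal is correct and takes essentially the same route as the paper: the paper's entire proof of Theorem~\ref{Theorem:caracterizationMorsetype} is the one-line summary citing Proposition~\ref{Proposition:propertiesofprojectedvector}(3) together with Propositions~\ref{Proposition:(1)implies(2)} and \ref{Proposition:(2)implies(1)}, which is exactly the assembly you describe. The only content you add --- that at a zero of a gradient field the linearization is (up to an invertible metric factor) the Hessian, so (i)$\Leftrightarrow$(iii) follows from the leafwise identity $t_{\fa(\Omega)}(\vec n)\big|_L=\tfrac12\,\grad(\vr\big|_L)$ --- is a standard fact the paper leaves implicit, and your care about the identity holding on all of $L$ is a legitimate sharpening, not a deviation.
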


\begin{Corollary}
If the real dimension of $\Sigma\setminus \{0\}$ is two and
$\Sigma\setminus \{0\}$ is transverse to the foliation
$\fa(\Omega)$, then $\Sigma$ has a cone structure.

\end{Corollary}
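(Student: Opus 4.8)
The plan is to reduce the conical structure of $\Sigma$ near the origin to a single transversality statement: that $\Sigma\setminus\{0\}$ meets every sphere $S^{2n-1}(r)$, $0<r\le\epsilon$, transversally, equivalently that the restriction $\vr\big|_{\Sigma\setminus\{0\}}$ has no critical points. Granting this, the cone structure follows by integrating the gradient flow of $\vr\big|_\Sigma$.

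First I would pin down the tangent geometry of $\Sigma$ at a point $p\in\Sigma\setminus\{0\}$. By Proposition~\ref{Proposition:propertiesofprojectedvector}(2) we have $\Sigma=\Sing(t_{\fa(\Omega)}(\vec n))$, so $t_{\fa(\Omega)}(\vec n)(p)=0$, i.e. $\vec n(p)=aX+bJX$ for some $a,b\in\re$; since $p\ne 0$ and $\Sing(\Omega)=\{0\}$ the vectors $X(p),JX(p)$ are independent and $(a,b)\ne(0,0)$. As computed in Proposition~\ref{Proposition:propertiesofprojectedvector}(1), $T_p(\fa(\Omega))=\operatorname{span}(X,JX)^{\perp}$, whence $\vec n(p)\in\operatorname{span}(X,JX)$ is orthogonal to $T_p(\fa(\Omega))$. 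Now I use the hypothesis that $\Sigma\setminus\{0\}$ is transverse to $\fa(\Omega)$ and of real dimension two: since $\dim T_p\Sigma=2$ and $\dim T_p(\fa(\Omega))=2n-2$, transversality forces $T_p\Sigma\oplus T_p(\fa(\Omega))=\re^{2n}$, so orthogonal projection onto $\operatorname{span}(X,JX)$ maps $T_p\Sigma$ isomorphically onto $\operatorname{span}(X,JX)$. If $\vec n(p)$ were orthogonal to $T_p\Sigma$, then, being already orthogonal to $T_p(\fa(\Omega))$, it would be orthogonal to all of $\operatorname{span}(X,JX)$, forcing $\vec n(p)=0$, a contradiction. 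Hence $\vec n(p)$ has a nonzero component tangent to $\Sigma$, and since $\grad(\vr)=2\vec n$ this says exactly that $d(\vr\big|_\Sigma)(p)\ne 0$; thus $\vr\big|_{\Sigma\setminus\{0\}}$ has no critical points and $\Sigma\setminus\{0\}$ is transverse to every sphere $S^{2n-1}(r)$.

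It then remains to convert this into a cone. On $\Sigma\setminus\{0\}$ the function $\vr$ is a submersion onto an interval $(0,\epsilon^2]$ whose level sets are precisely the links $\Sigma\cap S^{2n-1}(r)$. I would take the vector field $W=\grad_\Sigma(\vr)/\|\grad_\Sigma(\vr)\|^2$ tangent to $\Sigma$, which satisfies $d\vr(W)\equiv 1$, and flow along $-W$; this produces a diffeomorphism $\Sigma\cap\{0<\vr\le\epsilon^2\}\cong(\Sigma\cap S^{2n-1}(\epsilon))\times(0,\epsilon]$ compatible with the radius, so that adjoining the origin exhibits $\Sigma$ as the topological cone over its link $\Sigma\cap S^{2n-1}(\epsilon)$. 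The main obstacle is the first step, the transversality/linear-algebra argument showing $\vr\big|_\Sigma$ has no critical points, together with the routine but necessary check that the $(-W)$-flow lines actually converge to $0$ as $\vr\to 0$ so that the cone point is genuinely attained; the latter follows from closedness of $\Sigma$ and the fact that the flow drives $\vr$ monotonically to $0$.
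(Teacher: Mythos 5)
Your proof is correct and takes essentially the same route as the paper: the paper's own argument projects $\vec n$ onto $T\Sigma$ along $T(\fa(\Omega))$ (using the splitting $T\Sigma \oplus T(\fa(\Omega)) = \re^{2n}$ provided by the dimension-two and transversality hypotheses), asserts this projected field $t_\Sigma(\vec n)$ is nonvanishing, and declares its orbits to give the cone structure. Your linear-algebra step showing that $\vr\big|_{\Sigma\setminus\{0\}}$ has no critical points is precisely the justification of that nonvanishing claim, which the paper states without proof, and your normalized-gradient flow construction (together with the convergence of flow lines to the origin) fills in the details behind the paper's one-line conclusion.
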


\begin{proof}
We denote by $t_\Sigma(\vec n)$ the projection of $\vec n$ onto
$T\Sigma$ along $T(\fa(\Omega))$. We note that $t_\Sigma(\vec n)$
is different from zero. Hence, the orbits of $t_\Sigma(\vec n)$
define a cone structure for $\Sigma$.

\end{proof}

\section{Proof of the non-existence theorem}
\label{section:proofofmain}

Let $\Omega$ be an integrable holomorphic one-form  of Morse type
and $\Sigma(\Omega)$ of dimension two. Let  $L\in \fa(\Omega)$ be
a leaf of $\fa(\Omega)$ such that $\Sigma\cap L \ne \emptyset$,
indeed we assume that there is a point $p \in L$ such that
$\vr\big|_L$ has a nondegenerate critical point of index $0$ at
$p$ (in other words, $p$ is a local minimum point for
$\vr\big|_L$).
\begin{Lemma}
\label{Lemma:indexzero} There is a neighborhood $U$ of $p$ in
$\ov{D^{2n}}$ such given a leaf $L^\prime$ intersecting $U$, the
restriction $\vr\big|_{L ^\prime}$ exhibits a critical point
$p^\prime \in U$ of index $0$.
\end{Lemma}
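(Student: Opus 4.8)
The plan is to prove this via a persistence-of-nondegenerate-critical-points argument, treating the family of restrictions $\vr\big|_{L'}$ as a smooth family parametrized by the transverse leaf-space coordinate. Since $\fa(\Omega)$ is holomorphic and nonsingular near $p$ (as $\Sing(\Omega)=\{0\}$ and $p$ is a local minimum of $\vr\big|_L$, so $p\neq 0$), I would first choose a distinguished coordinate neighborhood $(w_1,\dots,w_{n-1},w_n)\in U$ for $\fa(\Omega)$ centered at $p$, where the leaves are the plaques $\{w_n=c\}$ and $L\cap U=\{w_n=0\}$. Writing $w_j=u_{2j-1}+\sqrt{-1}\,u_{2j}$ for $1\leq j\leq n$, the restriction $\vr\big|_{L'}$ on the plaque $\{w_n=c\}$ becomes a smooth real function of the $2n-2$ variables $(u_1,\dots,u_{2n-2})$, depending smoothly on the two extra parameters $(u_{2n-1},u_{2n})$ encoding $c$.

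Next I would set up the critical-point equations exactly as in Proposition~\ref{Proposition:(1)implies(2)}: the critical points of $\vr\big|_{L'}$ are the solutions of $\frac{\partial\vr}{\partial u_j}=0$ for $1\leq j\leq 2n-2$, and $\Sigma\cap U$ is precisely the locus of these solutions. By hypothesis $p$ is a nondegenerate critical point of $\vr\big|_L$ of index $0$, so the $(2n-2)\times(2n-2)$ Hessian $\det\big(\frac{\partial^2\vr}{\partial u_i\partial u_j}\big)_{1\leq i,j\leq 2n-2}$ at $p$ is nonzero (indeed positive definite). The Implicit Function Theorem then yields, shrinking $U$ if necessary, a smooth map $(u_{2n-1},u_{2n})\mapsto\big(u_1(u_{2n-1},u_{2n}),\dots,u_{2n-2}(u_{2n-1},u_{2n})\big)$ giving, for each nearby leaf $L'=\{w_n=c\}$, a unique critical point $p'\in U$ of $\vr\big|_{L'}$. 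This is the same parametrization of $\Sigma\setminus\{0\}$ already invoked in Proposition~\ref{Proposition:(1)implies(2)}, now exploited to track the critical point rather than merely to compute the dimension of $\Sigma$.

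It remains to check that the index of the critical point $p'$ is still $0$, i.e. that $\vr\big|_{L'}$ has a local minimum at $p'$. This follows from continuity of the Hessian: the restricted Hessian $H(c)=\big(\frac{\partial^2\vr}{\partial u_i\partial u_j}(p')\big)_{1\leq i,j\leq 2n-2}$ depends continuously on $c$ (equivalently on $(u_{2n-1},u_{2n})$), and at $c=0$ it is positive definite by the index-$0$ hypothesis at $p$. Since positive definiteness is an open condition on symmetric matrices (the leading principal minors vary continuously and stay positive), after shrinking $U$ the Hessian $H(c)$ remains positive definite for all nearby $c$, so $p'$ is a nondegenerate critical point of index $0$, i.e. a local minimum. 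I expect the main obstacle, though a mild one, to be the bookkeeping that the coordinate-dependent Hessian used here really agrees with the intrinsic Hessian of $\vr$ restricted to the leaf (distinguished holomorphic coordinates identify the plaques smoothly, and $\vr$ is real analytic, so this is routine); granting that identification, the persistence of both nondegeneracy and index is immediate from the IFT together with the openness of positive definiteness.
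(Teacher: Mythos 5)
Your proof is correct, but it follows a genuinely different route from the paper's. The paper argues topologically: it uses the local cone structure of $\Sigma$ near $p$ (a cone over a circle, transverse to $\fa(\Omega)$), observes that the level sets of $\vr\big|_{L\cap U}$ are $(2n-3)$-spheres centered at $p$, and then applies the Poincar\'e--Hopf theorem to the projected gradient vector field $t_{\fa(\Omega)}(\vec n)$ on a disc $D'\subset L'\cap U$ obtained by lifting a small disc $D\subset L\cap U$ whose boundary is a level curve of $\vr\big|_L$; the boundary behavior forces a Morse singularity of $t_{\fa(\Omega)}(\vec n)\big|_{L'}$ inside $D'$, which is identified as a minimum of $\vr\big|_{L'}$. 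You instead reuse the implicit-function-theorem computation that the paper itself sets up in Proposition~\ref{Proposition:(1)implies(2)} to parametrize $\Sigma$ by the transverse coordinates $(u_{2n-1},u_{2n})$, and add the observation that positive definiteness of the $(2n-2)\times(2n-2)$ restricted Hessian is open, so the critical point persists with index $0$ on nearby plaques. Your version is more elementary and yields strictly more information: local uniqueness of the critical point near $p$ and its real-analytic dependence on the leaf, with no degree-theoretic input; it would also handle Lemma~\ref{Lemma:indexk} verbatim, since the index of a nondegenerate Hessian is locally constant, thereby avoiding the deformation to generic boundary position and the appeal to Pugh's theorem used there. What the paper's approach buys is robustness in the direction the authors flag in their Remark: the Poincar\'e--Hopf argument produces a singularity on $L'$ from boundary data alone, without presupposing that the nearby critical points are nondegenerate, which is what one wants in a more general (non-Morse or degenerate) setting. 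Your two housekeeping points are handled correctly: at a critical point the coordinate Hessian in the plaque chart $(u_1,\dots,u_{2n-2})$ \emph{is} the intrinsic Hessian of $\vr\big|_{L'}$ (second derivatives transform tensorially at critical points), so the index is chart-independent; and shrinking $U$ to a product neighborhood so that every leaf meeting $U$ contains a plaque $\{w_n=c\}$ with $c$ in the IFT parameter range, and with $p'(c)$ staying in $U$, is indeed routine.
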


\begin{proof}
Indeed, we choose a distinguished coordinate neighborhood $U$ for
$\fa(\Omega)$, such that $\fa(\Omega)\big|_U$ is equivalent to a
foliation by $(2n-2)$-discs. The variety of contacts $\Sigma$ has
a cone structure and is  a $2$-dimensional analytic manifold in a
neighborhood of $p$  transverse to $\fa(\Omega)$. The level
surfaces of $\vr\big|_{L\cap U}$ are $(2n-3)$-dimensional spheres
on $L$ centered at $p$. Moreover, for small $\epsilon>0$ a
connected component $\Sigma_0$ of the intersection $\Sigma \cap
S^{2n-1}(\epsilon)$ is a circle $S^1$. Hence, $\Sigma_0$ is a cone
over $S^1$ and the gradient vector field $t_{\fa(\Omega)}(\vec n)$
of $\vr\big|_L$ is tangent to $L$ and has a Morse type singularity
at $p$, of index zero\footnote{We can therefore consider the
situation as $\Sigma\cap U$ being a (cylinder) product of the
intersection a  circle $\gamma \subset (\Sigma\cap S^{2n-1}(r)\cap
U)$ by an interval $(-\delta, \delta)$, with $p$ corresponding to
the level zero. The  level surfaces of $\vr\big|_{L\cap U}$ are
$(2n-3)$-spheres centered at $p$ and therefore level surfaces of
$\vr\big|_U$ can be thought as cylinders transverse to
$\fa(\Omega)$ off $\Sigma\cap U$.}. Given a leaf $L^\prime$ such
that $L^\prime$ intersects $U$ at a point close enough to $p$,
then by Poincar\'e-Hopf theorem, applied to a disc
$D^\prime\subset L^\prime \cap U$, obtained as lifting of a
sufficiently small disc $D\subset L\cap U$ centered at $p$ whose
boundary is a level curve of $\vr\big|_L$, we conclude that
$t_{\fa(\Omega)}(\vec n)\big|_{L^\prime}$ on $L^\prime$ has a
Morse type singularity at some point $p^\prime \in D^\prime$. This
singularity of Morse index zero and corresponding to a point of
minimum for the restriction $\vr\big|_{L^\prime}$.
\end{proof}

Let now $\Sigma$ be of dimension two and transverse to
$\fa(\Omega)$. Let $L\in \fa(\Omega)$ be a leaf of $\fa(\Omega)$
such that $\Sigma \cap L\ne \emptyset$, indeed we assume that
there is a point $p\in L $ such that $\vr\big|_L$ has a
non-degenerate critical point  of index $k>0$ at $p$.

\begin{Lemma}
\label{Lemma:indexk} There is a neighborhood $U$ of $p$ in
$\ov{D^{2n}}$ such that given a leaf $L^\prime$ intersecting $U$,
the distance function $\vr\big|_{L^\prime}$ on $L^\prime$ exhibits
a critical point $p^\prime \in U \cap L^\prime$ of index $k$.
\end{Lemma}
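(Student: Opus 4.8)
The plan is to follow the strategy of the proof of Lemma~\ref{Lemma:indexzero}, replacing the elementary Poincar\'e--Hopf step (which only detects a minimum) by the full Pugh formula of \S\ref{section:Pugh} together with the index computation of the Example there. First I would fix a distinguished coordinate neighborhood $U$ for $\fa(\Omega)$ in which the leaves are $(2n-2)$-discs and $p$ is the origin, with $L\cap U$ the plaque through $p$. By Proposition~\ref{Proposition:propertiesofprojectedvector}(3) the restriction of $t_{\fa(\Omega)}(\vec n)$ to any leaf equals $\tfrac{1}{2}\grad(\vr|_{L})$, and by part (2) its singular set is exactly $\Sigma$. Applying the Morse Lemma to $\vr|_L$ at the nondegenerate index-$k$ critical point $p$, I obtain real coordinates in which $\vr|_L$ is the standard quadratic form of index $k$, so that $t_{\fa(\Omega)}(\vec n)|_L$ is, up to the positive factor $\tfrac{1}{2}$, the gradient field $\grad(f)$ of the Example of \S\ref{section:Pugh} with the leaf dimension $2n-2$ in place of $n$.

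Next I would choose a small round disc $D=D^{2n-2}(\epsilon)\subset L\cap U$ centred at $p$ in these Morse coordinates. On $\partial D=S^{2n-3}(\epsilon)$ the contact set of $t_{\fa(\Omega)}(\vec n)|_L$ with $\partial D$ is precisely the one analysed in the Example: the exit region $R^1_-$ is homotopy equivalent to $S^{2n-3-k}$ and the tangency set $\Gamma^1$ to $S^{k-1}\times S^{2n-3-k}$, and the contact is generic. For a leaf $L'$ meeting $U$ close to $p$ I would lift $D$ to a disc $D'\subset L'\cap U$. Here the transversality of $\Sigma$ to $\fa(\Omega)$ together with the Morse-type hypothesis, through Theorem~\ref{Theorem:caracterizationMorsetype}, guarantees that $\Sigma$ meets $L'$ in exactly one point $p'$ near $p$, which by Proposition~\ref{Proposition:propertiesofprojectedvector}(2) is the unique singularity of $t_{\fa(\Omega)}(\vec n)|_{L'}$ in $U$ and is nondegenerate.

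Then I would apply Pugh's generalization of the Poincar\'e--Hopf theorem to $t_{\fa(\Omega)}(\vec n)|_{L'}$ on $D'$. By structural stability of the generic contact configuration, for $\epsilon$ small and $L'$ close enough the exit region and tangency set of $t_{\fa(\Omega)}(\vec n)|_{L'}$ on $\partial D'$ stay diffeomorphic to $R^1_-$ and $\Gamma^1$, hence homotopy equivalent to $S^{2n-3-k}$ and $S^{k-1}\times S^{2n-3-k}$. Since $D'$ contains the single singularity $p'$ of this gradient-like field, $\partial D'$ is a small enclosing sphere, and the Example shows that for a nondegenerate singularity of Morse index $j$ the exit region on such a sphere is homotopy equivalent to $S^{2n-3-j}$; the identity $S^{2n-3-j}\cong S^{2n-3-k}$ forces $j=k$. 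This is the assertion of the lemma.

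The \emph{main obstacle} is the transfer of the contact data $(R^1_-,\Gamma^1)$ from $L$ to the nearby leaf $L'$: one must verify that the small perturbation of the plaque neither creates nor destroys tangencies of the projected gradient with $\partial D'$, i.e. that the generic contact along $\partial D$ persists. This is exactly where the nondegeneracy supplied by the Morse-type assumption is essential, as it keeps $p'$ nondegenerate while it moves along $\Sigma$ and hence keeps the exit-region homotopy type constant. A minor technical point is to choose $\epsilon$ uniformly so that the round disc $D'$ in the continuously varying Morse coordinates of $L'$ still has boundary transverse, off $\Gamma^1$, to the level sets of $\vr|_{L'}$.
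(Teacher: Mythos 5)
Your overall strategy coincides with the paper's: a distinguished chart in which the leaves are $(2n-2)$-discs, a Morse-lemma disc $D\subset L\cap U$ whose boundary is in generic position for $t_{\fa(\Omega)}(\vec n)\big|_L$, a lifted disc $D^\prime\subset L^\prime\cap U$, and Pugh's theorem applied on $D^\prime$; you even supply details the paper leaves implicit (uniqueness of the singularity $p^\prime$ via the graph structure of $\Sigma$ over the transverse coordinates, and persistence of the generic contact data, where the paper instead deforms $D^\prime$ to a disc $\widetilde{D^\prime}$ in generic position). The one step that does not close as written is the last one. Pugh's formula computes only the alternating sum $\sum_i (-1)^i n_i$ from the boundary data, so with a single nondegenerate zero of index $j$ inside $D^\prime$ it yields $(-1)^j=(-1)^k$, i.e. the \emph{parity} of the index, not the index. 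You try to extract $j$ itself from the homotopy type of the exit region, but the Example's identification $R^1_-\simeq S^{2n-3-j}$ is established for a small round sphere in Morse coordinates centered at the singularity, whereas $\partial D^\prime$ is not such a sphere for $p^\prime$; and the homotopy type of the exit set is \emph{not} invariant under changing the enclosing hypersurface (tangency pairs can be created or destroyed under isotopy of the boundary; only the Euler-characteristic combination entering Pugh's formula is automatically preserved). So the sentence ``the identity $S^{2n-3-j}\cong S^{2n-3-k}$ forces $j=k$'' compares exit sets on two different hypersurfaces without a bridge.

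The bridge exists and should be made explicit. Since $t_{\fa(\Omega)}(\vec n)\big|_{L^\prime}=\frac{1}{2}\grad(\vr\big|_{L^\prime})$ is a gradient field whose only zero in $D^\prime$ is $p^\prime$, the maximal invariant set in $D^\prime$ is $\{p^\prime\}$ (a gradient flow admits no nonconstant full orbit with a single critical point), so $D^\prime$ is an isolating block and the pointed quotient $D^\prime/R^1_-$ realizes the Conley index of $p^\prime$, which for a nondegenerate zero of Morse index $j$ is a sphere $S^{(2n-2)-j}$; combined with your stability claim $R^1_-\simeq S^{2n-3-k}$ and the contractibility of $D^\prime$, the relative homology $H_i(D^\prime, R^1_-)\cong \tilde H_{i-1}(S^{2n-3-k})$ is concentrated in degree $(2n-2)-k$, forcing $j=k$. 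Alternatively, and more cheaply, you can bypass Pugh at this point altogether: $p^\prime$ moves along the analytic two-dimensional manifold $\Sigma$ transverse to the leaves, the Hessian of $\vr\big|_{L^\prime}$ at $p^\prime$ varies continuously with the leaf and is everywhere nondegenerate by Proposition~\ref{Proposition:(2)implies(1)}, so its index is locally constant and equal to $k$. Note finally that the paper's own proof makes the same jump (it concludes ``Morse index $k$'' directly from Pugh's theorem), so your instinct to pin the index down is sound; it just needs one of these invariance arguments to be complete.
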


\begin{proof}
The proof is similar to the above. Choose a distinguished
coordinate neighborhood $U$ for $\fa(\Omega)$, such that
$\fa(\Omega)\big|_U$ is equivalent to a foliation by
$(2n-2)$-discs. By Morse's lemma, we have a sufficiently small
disc $D\subset L \cap U$ centered at $p$ whose boundary is in
generic position for $t_{\fa(\Omega)}(\vec n)\big|_L$. Arguing as
in the proof of Lemma~\ref{Lemma:indexzero} we obtain a disc
$D^\prime \subset L^\prime \cap U$ as lift of $D\subset L \cap U$,
and we may deform $D^\prime$ to a disc $\widetilde{D^\prime}$
whose boundary is in generic position for $t_{\fa(\Omega)}(\vec
n)\big|_{L^\prime}$. By Pugh's generalization of Poincar\'e-Hopf
theorem we conclude that $t_{\fa(\Omega)}(\vec n)\big|_{L^\prime}$
has a Morse type singularity at some point $p^\prime \in
\widetilde{D^\prime}$, of Morse index $k$.
\end{proof}

\begin{proof}[Proof of Theorem~\ref{Theorem:main}]

Assume by contradiction that $\fa$ is transverse to the boundary
sphere $S^{2n-1}(1)$. We note that $\fa$ is defined by an
integrable holomorphic one-form $\Omega$ in $U\supset \ov{D^{2n}}$
and according to \cite{Ito-Scarduatopology} there is a unique
singular point  $p$ of $\Omega$ inside the disc $\ov{D^{2n}}$ and
this is a non-degenerate singular point: the determinant of the
matrix $D(\Omega)(p)$ of the coefficients of the linear part of
$\Omega$ is different from $0$. By a M\"obius transformation we
can assume that the origin is this singular point. Since $\Omega$
is integrable and  $n\geq 3$,  by Malgrange's theorem
(\cite{[Malgrange]}) $\Omega$ admits a local holomorphic first
integral $f\colon V \to \bc$, \, $\Omega\big|_V= gdf$ in a
neighborhood $V$ of $0$ in $\bc^n$, where $g$ is a non-zero
holomorphic function in $V$. Then $\fa(\Omega)\big|_V$ is defined
by the level hypersurfaces of $f$ and only finitely many leaves
accumulate on the origin. Let $\vr$ be the distance function with
respect to the origin $0\in \bc^n$. We consider the variety of
contacts $\Sigma=\Sigma(\Omega, \vr)$. Since by hypothesis $\fa$
is of Morse type there is a minimal point (that is, a critical
point of Morse index $0$) for the restricted distance function
$\vr\big|_L$  to any leaf $L$ intersecting $V$ and these points
belong to $\Sigma$. Let $\Sigma _0 ^*$ be a  connected component
of $\Sigma \setminus \{0\}$ which contains such minimal points in
a neighborhood of the origin. According to
Theorem~\ref{Theorem:caracterizationMorsetype} and
Lemma~\ref{Lemma:indexzero}, $\Sigma_0 ^*$ in $U \supset
\ov{D^{2n}}$ is of real dimension two and has an unbounded cone
structure. Then $\Sigma_ 0 ^*$ intersects $S^{2n-1}(1)$. This is
in contradiction to the assumption of transversality.

\end{proof}

\begin{Remark}
{\rm Thought Lemma~\ref{Lemma:indexk} has not been used in a
direct way in the proof of Theorem~\ref{Theorem:main} (it is
enough to begin with a minimal (index zero) singular point for
$\vr\big|_L$ for a leaf $L$ close to the origin) we have stated it
because we think it will be useful in a more general setting.}
\end{Remark}

\section{Examples}
\label{section:examples}

\begin{Example}
{\rm Consider the function $f(z)=\sum\limits_{j=1}^n z_j ^2$. The
foliation $\fa(df)$ is defined by the level surfaces of $f$. The
variety of contacts $\Sigma$ between $\fa(df)$ and $\fa(d\vr)$ is
defined by the equations: $ 2 \ov z_j = \lambda z_j, 1, \leq j
\leq n, \lambda \in \bc.$

We will write complex numbers $z_j = r_j e^{\sqrt{-1}\theta _j}$
and $c=re^{\sqrt{-1}\theta}$ in polar coordinates.

We will denote by $L_c$ the leaf of $\fa(df)$ defined by
$\{f=c\}$. To explain $\Sigma$ we prepare some notations:

\begin{equation}
\Sigma_i ^1 \cap L_c=\{(0,...,0,r_i e^{\sqrt{-1} \theta_i},
o,...,0)| r_i ^2 = r, \theta_i = \frac{\theta}{2} {\rm or }
\frac{\theta}{2} + \pi \}
\end{equation}

\begin{equation}
\Sigma_{i,j} ^2 \cap L_c=\{(0,...,0,r_i e^{\sqrt{-1} \theta_i},
0,...,r_j e^{\sqrt{-1} \theta _j},0,...,0) | r_i ^2  + r_j ^2= r,
\theta_i, \theta_j = \frac{\theta}{2} {\rm or } \frac{\theta}{2} +
\pi \}
\end{equation}

\begin{equation}
\Sigma^n_{1,2,...,n} \cap L_c = \{(r_1 e^{\sqrt{-1}
\theta_1},...,,r_n e^{\sqrt{-1} \theta_n})| r_1 ^2+...+ r_n ^2 =
r, \theta_i = \frac{\theta}{2} {\rm or } \frac{\theta}{2} + \pi\}
\end{equation}

Each dimension of $\Sigma_{i_1,...,i_\ell} ^\ell \cap L_c$ is
$\ell-1$ and $\dim(\Sigma ^\ell _{i_1,...,i_{\ell}})$ is equal to
$\ell + 1$. Then we can explain $\Sigma$ as follows:

\begin{equation}
\Sigma=(\bigcup _{i=1}^n \Sigma_ i ^1) \cup(\bigcup \Sigma_{i,j}
^2) \cup ... \cup (\Sigma_{1,2,...,n} ^n).
\end{equation}

In this degree two homogeneous case $\Sigma$ is a $\vec
R$-invariant set.

}
\end{Example}

\begin{Example}
{\rm Let $\lambda_j \, \, (1 \leq j \leq n)$ be complex numbers
such that $|\lambda_i|\ne |\lambda_j|$ for $i \ne j$. Consider the
function $f_\lambda(z)=\sum\limits_{j=1}^n \lambda_j z_j ^2$. We
define the foliation $\fa(df_\lambda)$. The variety of contacts
$\Sigma$ between $\fa(df_\lambda)$ and $\fa(d\vr)$ is defined by
the equations:
\begin{equation}
2 \ov \lambda_j \ov z_j = \xi z_j, \, 1 \leq j \leq n, \, \xi \in
\bc.
\end{equation}

$\Sigma\setminus \{0\}$ consists of $n$-connected components
$\Sigma_1 \cup \Sigma_ 2 \cup ... \cup \Sigma_n$ where
$\Sigma_i=\{0,...,0, r_i e^{ \sqrt{-1} \theta_i},0,...,0) | r_i
>0, 0\leq \theta _i < 2\pi\}$.

Let $L_c=\{f_\lambda = c\}$ be a leaf of $\fa(d f_\lambda)$. Take
$p=(\omega_1,0,...,0)\in \Sigma_1 \cap L_c$. From now on, let us
calculate the real hessian matrix of $\vr\big|_{L_c}$ represented
by the equations $(z_1(z_2,...,z_n,c), z_2,...,z_n), \, z_i = x_i
+ \sqrt{-1} y_i$, in a neighborhood of $p \in \Sigma_1 \cap L_c$.
First we note that $(z_1(z_2,...,z_n,c), z_2,...,z_n)$ is the
holomorphic function of complex variables $z_2,...,z_n$. By
Cauchy-Riemann equations, we get
\begin{equation}
\frac{\partial z_1}{\partial z_j} = \frac{\partial x_1}{\partial
x_j} + \sqrt{-1} \frac{\partial y_1}{\partial x_j}= \frac{\partial
y_1}{\partial y_j}- \sqrt{-1}\frac{\partial x_1}{\partial y_j}.
\end{equation}

Differentiating $c=f_\lambda(z)$ by $\frac{\partial}{ \partial
z_j}$, we have
\begin{equation}
0=2\lambda_1 z_1 \frac{\partial z_1}{\partial z_j} + 2 \lambda_j
z_j.
\end{equation}
Since $0=2\lambda_1 \omega_1(\frac{\partial x_1}{\partial x_j} (p)
+ \sqrt{-1}\frac{\partial y_1}{\partial x_j} (p))$, we have
$\frac{\partial x_1}{\partial x_j}(p)=\frac{\partial y_1}{\partial
y_j}(p)=0$ and $\frac{\partial y_1}{\partial x_j}(p)= -
\frac{\partial x_1}{\partial y_j}(p)=0$. Then we obtain
\begin{equation}
\frac{\partial \vr}{\partial x_j}(p)=2u_1 \frac{\partial
x_1}{\partial x_j}(p) + 2 v_1 \frac{\partial y_1}{\partial x_j}(p)
+ 2.0 = 0
\end{equation}
and
\begin{equation}
\frac{\partial \vr}{\partial y_j}(p) = 2u_1 \frac{\partial
x_1}{\partial y_j}(p) + 2 v_1 \frac{\partial y_1}{\partial y_j}(p)
+ 2.0=0
\end{equation}
where $w_1=u_1 + \sqrt{-1} v_1$.

To obtain the real Hessian matrix $H(\vr\big|_{L_c})(p)$ at $p$ we
calculate the following equations:

\begin{equation}
\frac{\partial ^2 \vr}{\partial x_i \partial x_j}= 2
\frac{\partial x_1}{\partial x_i} \frac{\partial x_1}{\partial
x_j} + 2x_1 \frac{\partial ^2 x_1}{\partial x_i \partial x_j} + 2
\frac{\partial y_1}{\partial x_i}\frac{\partial y_1}{\partial x_j}
+ 2 y_1\frac{\partial ^2 y_1}{\partial x_i \partial  x_j} + 2
\delta_{ij}
\end{equation}

\begin{equation}
\frac{\partial ^2 \vr}{\partial y_i \partial x_j}= 2
\frac{\partial x_1}{\partial y_i} \frac{\partial x_1}{\partial
x_j} + 2x_1 \frac{\partial ^2 x_1}{\partial y_i \partial x_j} + 2
\frac{\partial y_1}{\partial y_i}\frac{\partial y_1}{\partial x_j}
+ 2 y_1\frac{\partial ^2 y_1}{\partial y_i \partial  x_j}
\end{equation}

\begin{equation}
\frac{\partial ^2 \vr}{\partial y_i \partial y_j}= 2
\frac{\partial x_1}{\partial y_i} \frac{\partial x_1}{\partial
y_j} + 2x_1 \frac{\partial ^2 x_1}{\partial y_i \partial y_j} + 2
\frac{\partial y_1}{\partial y_i}\frac{\partial y_1}{\partial y_j}
+ 2 y_1\frac{\partial ^2 y_1}{\partial y_i \partial  y_j} + 2
\delta_{ij}
\end{equation}

Differentiating $0= 2 \lambda_1 z_1 \frac{\partial z_1}{\partial
z_j}+ 2 \lambda_j z_j$ by $\frac{\partial}{\partial z_i}$, we have
\begin{equation}
\frac{\partial ^2 z_1}{\partial z_i \partial z_j}(p) =
\begin{cases}
-\frac{\lambda_i}{\lambda_1}. \frac{1}{w_1}, \, (i \ne j) \\
0 \, \, (i \ne j)
\end{cases}
\end{equation}

}
\end{Example}

Put $\frac{\lambda_i}{\lambda_1}=a_i + \sqrt{-1} b_i$.  Then we
get each component of $H(\vr \big|_{L_c})(p)$:

\begin{equation}
\frac{\partial ^2 \vr}{\partial x_i \partial x_j}(p) =
\begin{cases} 2[- \frac{ a_i (u_i^2 - v_i ^2) + 2b_i u_1
v_1}{|w_1|^2} + 1] , \, \, (i = j) \\
0 \, , (i\ne j)
\end{cases}
\end{equation}
\begin{equation}
\frac{\partial ^2 \vr}{\partial y_i \partial x_j}(p) =
\begin{cases} 2[ \frac{ b_i (u_i^2 - v_i ^2) - a_i u_1
v_1}{|w_1|^2}] , \, \, (i = j) \\
0 \, , (i\ne j)
\end{cases}
\end{equation}
\begin{equation}
\frac{\partial ^2 \vr}{\partial y_i \partial y_j}(p) =
\begin{cases} 2[ \frac{ a_i (u_i^2 - v_i ^2) + 2b_i u_1
v_1}{|w_1|^2} + 1] , \, \, (i = j) \\
0 \, , (i\ne j)
\end{cases}
\end{equation}

Then the characteristic equation of $H(\big|_{L_c})(p)$ is
\begin{equation}
\Pi _{i=2}^n [ (1- \mu)^2  - (a_i ^2 + b_i ^2)]=0
\end{equation}
that is, $\mu = 1 \pm \sqrt{ a_i ^2 + b _i ^2}, \, ( 2 \leq i \leq
n)$. The sign  of the eigenvalues $\mu$ is defined by
$\frac{|\lambda_i|}{\lambda_1|} > 1$ or
$\frac{|\lambda_i|}{|\lambda_1|} <1$. Furthermore, we assume that
$|\lambda_1|> |\lambda_2|> ...>|\lambda_n|$, each $\Sigma_i$
consists of critical points of Morse index $i-1$ of
$\vr\big|_{L_c}$.

\begin{Example}
{\rm  Let $\epsilon_j \, \, (1 \leq j \leq n)$ be sufficiently
small complex numbers such that $|1 + \epsilon_i| \ne | 1 +
\epsilon _j|$ for $ i \ne j$. Consider $f_\epsilon (z)=
\sum\limits_{j=1}^n (1+ \epsilon _j) z_j ^2$. We mean that
$f_\epsilon(z)$ is a deformation of $f(z)=\sum\limits_{j=1}^n z_j
^2$. We can check that the dimension of the variety of contacts
$\Sigma(df)$ is $n+1$ and the dimension of $\Sigma(df_\epsilon)$
is two.}
\end{Example}


\section{Linear foliations of Morse type}
\label{section:linearfoliationsMorse}
 Let
$\Omega=\sum\limits_{i=1}^ n \alpha_i(z) dz_i$ be a linear
one-form on $\bc^n, \, n \geq 3$, defined by a quadratical
polynomial $f(z)=\frac{1}{2}\sum\limits_{i,  j =1} ^n a_{ij} z_i
z_j$, with $a_{ji}=a_{ij}$: \, $\Omega=df$. We denote by
$A=(a_{ij})$ the $n \times n$ complex invertible matrix with
$(i,j)$ component $a_{ij}$. The corresponding gradient vector
field is $\grad(\Omega)=\sum\limits_{i=1}^n
\ov{\alpha_i(z)}\frac{\partial}{\partial z_i}$, and the radial
vector field of the distance function $\vr(z)=\sum\limits_{j=1}^n
|z_i|^2 $ in $\bc^n$ is $\vec R=\sum\limits_{i=1}^n
z_i\frac{\partial}{\partial z_i}$. The corresponding variety of
contacts $\Sigma$ between $\fa(\Omega)$ and $\{d \vr=0\}$ is
defined by the following equation:
\begin{equation}
\Sigma=\{z \in \bc^n : \, \vec R(z)= \mu(z). \grad(\Omega)(z)\}
\end{equation}
where
\begin{equation}
\mu(z)=\frac{\lg \vec R(z), \grad(\Omega)(z)
\rg}{||\grad(\Omega)(z)||^2}
\end{equation}
By its turn  $\vec R(z)= \mu(z). \grad(\Omega)(z)$ is equivalent
to
\begin{equation}
z_j=\mu(z).\ov{\alpha_j(z)}, \, \forall j=1,...,n \end{equation}

i.e.,

\begin{equation}
\label{equation:linear}  A\begin{pmatrix} z_1 \\ \vdots \\
z_n\end{pmatrix} = \frac{1}{\ov {\mu(z)}}
\begin{pmatrix} \ov {z_1} \\ \vdots \\ \ov {z_n} \end{pmatrix}
\end{equation}
Since $A$ is nonsingular we can write
\begin{equation}
z = \frac{1}{\ov {\mu(z)}} A^{-1} (\ov z) = \frac{1}{\ov {\mu(z)}}
A^{-1} \ov {\bigg (\frac{1}{\ov{{\mu(z)}}} A^{-1} \ov z\bigg)} =
\frac{1}{|{\mu(z)}|^2} A ^{-1} . \ov{A^{-1}} z
\end{equation}
Thus we obtain

\begin{Lemma}

Equation~\ref{equation:linear} implies  equation
equation~\ref{equation:linearsecond} below:
\begin{equation}
\label{equation:linearsecond} A^{-1} \cdot \ov{A^{-1}}
(z)=\frac{1}{|\mu(z)|^2}. z
\end{equation}
\end{Lemma}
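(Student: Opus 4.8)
The plan is to read equation~(\ref{equation:linear}) as a single linear relation coupling the vector $z$ to its complex conjugate $\overline{z}$ through the invertible matrix $A$ and the scalar $\mu(z)$, and then to eliminate $\overline{z}$ by a conjugate-and-resubstitute step. First I would use the nonsingularity of $A$ to solve (\ref{equation:linear}) for $z$, obtaining $z = \frac{1}{\overline{\mu(z)}}\,A^{-1}\overline{z}$. This isolates $z$ on the left but still involves $\overline{z}$ on the right, so the next task is to produce an independent expression for $\overline{z}$ in terms of $z$ alone.

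The key step is to apply entrywise complex conjugation to the identity just obtained. Conjugation is real-linear and an involution; it sends $A^{-1}$ to $\overline{A^{-1}} = (\overline{A})^{-1}$, the scalar $\frac{1}{\overline{\mu(z)}}$ to $\frac{1}{\mu(z)}$, and $\overline{z}$ back to $z$. Hence conjugating $z = \frac{1}{\overline{\mu(z)}}A^{-1}\overline{z}$ yields $\overline{z} = \frac{1}{\mu(z)}\,\overline{A^{-1}}\,z$. Substituting this expression for $\overline{z}$ into the formula for $z$ removes the conjugate altogether, and since $\mu(z)$ is a scalar at the point in question it passes freely through the matrices, so the two prefactors combine through $\mu(z)\,\overline{\mu(z)} = |\mu(z)|^2$ into a single real factor $\frac{1}{|\mu(z)|^2}$. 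Collecting terms then produces exactly equation~(\ref{equation:linearsecond}), which exhibits $z$ as an eigenvector of the $\bc$-linear operator $A^{-1}\overline{A^{-1}}$.

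The only delicate point, and the one I would watch most carefully, is the antilinear bookkeeping in the conjugation step: one must track that $\overline{\mu(z)}$ becomes $\mu(z)$ (and not the reverse), that conjugation commutes past $A^{-1}$ only once $A^{-1}$ has been replaced by $\overline{A^{-1}}$, and that $\overline{z}$ genuinely returns to $z$. There is no essential obstacle beyond this accounting; the substance of the statement is simply that (\ref{equation:linear}), a relation mixing $z$ and $\overline{z}$, forces $z$ to lie in an eigenspace of $A^{-1}\overline{A^{-1}}$ with a real eigenvalue, which is precisely the algebraic fact needed afterwards to describe $\Sigma$ as a union of complex lines.
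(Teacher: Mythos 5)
Your proof is correct and takes essentially the same route as the paper: solve~(\ref{equation:linear}) for $z$ via $A^{-1}$, conjugate that identity to get $\ov z=\frac{1}{\mu(z)}\,\ov{A^{-1}}z$, and resubstitute so the scalars combine into $|\mu(z)|^{2}$. One caveat you inherit from the paper itself: the computation actually yields $z=\frac{1}{|\mu(z)|^{2}}A^{-1}\ov{A^{-1}}z$, i.e.\ $A^{-1}\ov{A^{-1}}z=|\mu(z)|^{2}z$, so the factor $\frac{1}{|\mu(z)|^{2}}$ in the printed equation~(\ref{equation:linearsecond}) sits on the wrong side of the eigenvalue relation --- a harmless slip for the sequel, where only positivity and simplicity of the eigenvalues of $A^{-1}\ov{A^{-1}}$ are used.
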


Notice that the matrix $B:=A^{-1} \ov {A^{-1}}$ is an hermitian
matrix because $A$ is symmetric. Moreover we have:

\begin{Lemma}
The eigenvalues of $B=A^{-1} \ov {A^{-1}}$ are all positive.
\end{Lemma}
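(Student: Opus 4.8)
The plan is to recognize $B=A^{-1}\ov{A^{-1}}$ as a Gram-type matrix and to read off positive definiteness directly, rather than computing eigenvalues by hand. Write $C:=A^{-1}$. Since $A$ is symmetric we have $C^T=(A^T)^{-1}=A^{-1}=C$, so $C$ is symmetric as well, and consequently its conjugate transpose coincides with its entrywise conjugate: $\ov C^{\,T}=\ov{C^T}=\ov C$. Thus $B=C\,\ov C=C\,\ov C^{\,T}$ has exactly the shape of a Gram matrix $C C^{*}$, and, as already noted in the preceding paragraph (one checks $\ov B^{\,T}=B$ using the symmetry of $A$), $B$ is Hermitian, so its eigenvalues are real. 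It then remains only to show that they are strictly positive.

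For this I would evaluate the associated Hermitian quadratic form. Using the paper's convention $\lg u,v\rg=\sum_j u_j\ov{v_j}$, for $v\in\bc^n$ set $Q(v):=\ov v^{\,T}Bv=\ov v^{\,T}C\,\ov C\,v$ and $t:=\ov C\,v=\ov{A^{-1}}v$. Using the symmetry $C^T=C$ one has $\ov v^{\,T}C=(C\ov v)^T=\ov t^{\,T}$, whence $Q(v)=\ov t^{\,T}t=\|\ov{A^{-1}}v\|^2\ge 0$. Because $A$ is invertible, $\ov{A^{-1}}$ is invertible, so $Q(v)=0$ forces $v=0$; hence $Q$ is positive definite. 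A Hermitian matrix whose quadratic form is positive definite has only positive eigenvalues, which is precisely the assertion of the Lemma.

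The computation is short, so the only point that genuinely needs care is the bookkeeping between transpose, complex conjugation, and the chosen inner-product convention; the crucial structural observation — that the symmetry of $A$ turns $\ov{A^{-1}}$ into the conjugate transpose of $A^{-1}$ — is what exhibits $B$ as a Gram matrix and makes positivity automatic. For motivation I would also remark that this is consistent with equation~\ref{equation:linearsecond}: on the variety of contacts $B$ acts on $z$ by the positive real scalar $1/|\mu(z)|^2$, so positive eigenvalues are exactly what one expects. Since the Lemma is nonetheless a purely linear-algebraic statement about \emph{all} eigenvalues of $B$, the Gram-matrix argument above, rather than the contact-set interpretation, is the route I would take.
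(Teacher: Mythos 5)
Your proof is correct and takes essentially the same route as the paper's: both exploit the symmetry of $A$ to identify $\ov{A^{-1}}$ with the adjoint of $A^{-1}$ and reduce everything to the identity $\lg B v, v \rg = || \ov{A^{-1}} v ||^2$, which is positive for $v \ne 0$ since $\ov{A^{-1}}$ is invertible. The only cosmetic difference is that the paper applies this identity directly to an eigenvector, obtaining $\lambda ||\vec u||^2 = ||\ov{A^{-1}} \vec u||^2 > 0$, whereas you first package it as positive definiteness of the Gram matrix $C C^*$ and then invoke the standard fact that a positive definite Hermitian matrix has positive eigenvalues.
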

\begin{proof}
Given an eigenvector $\vec u \in \bc^n$ with eigenvalue $\lambda$
we have $B \vec u = \lambda \vec u$ and then $\lg A^{-1} \ov
{A^{-1}} \vec u , \vec u \rg = \lg \lambda \vec u, \vec u \rg =
\lambda \lg \vec u , \vec u \rg$. On the other hand, $\lg A^{-1}
\ov {A^{-1}} \vec u, \vec u \rg = \lg  \ov {A^{-1}} \vec u ,  (\ov
{A^{-1}})^t \vec u \rg = ||  \ov {A^{-1}} \vec u || ^2$. Thus we
have $\lambda || \vec u|| ^2 = || \ov {A^{-1}} \vec u || ^2$ what
implies $\lambda >0$.

\end{proof}

Let now $w^{(1)}\in \Sigma$ be a tangency point, $w^{(1)}\in
S^{n-1}(r)$. We introduce the complex line $\Sigma(w^{(1)})=\{ T.
w^{(1)}: \, T \in \bc\}$.
\begin{Lemma}
The complex line $\Sigma(w^{(1)})$ is contained in the variety of
contacts $\Sigma$.
\end{Lemma}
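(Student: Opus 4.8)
The plan is to reduce membership in $\Sigma$ to the purely algebraic condition that $z$ and $\ov{Az}$ are $\bc$-linearly dependent, and then exploit the conjugate-linearity of the map $z\mapsto\ov{Az}$.

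First I would record the precise meaning of the defining equation of $\Sigma$. Identifying the vector fields with their coefficient vectors, one has $\vec R(z)=z$ and $\grad(\Omega)(z)=\ov{Az}$. Since $\mu(z)$ is the Hermitian projection coefficient $\lg\vec R,\grad(\Omega)\rg/\|\grad(\Omega)\|^2$, the equation $\vec R(z)=\mu(z)\grad(\Omega)(z)$ holds exactly when $\vec R(z)$ lies in the complex line spanned by $\grad(\Omega)(z)$. Thus, using that $A$ is invertible so $Az\neq0$ for $z\neq0$, one gets the characterization: $z\in\Sigma\setminus\{0\}$ if and only if there is some $\mu\in\bc$, $\mu\neq0$, with $z=\mu\,\ov{Az}$. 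In particular, for the given tangency point $w:=w^{(1)}$ I obtain a scalar $\mu=\mu(w)\neq0$ with $w=\mu\,\ov{Aw}$, equivalently $\ov{Aw}=\mu^{-1}w$.

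The key computation is then immediate. For $T\in\bc$, $T\neq0$, the coefficient vector transforms $A$-linearly, $A(Tw)=T\,Aw$, so $\grad(\Omega)(Tw)=\ov{A(Tw)}=\ov{T}\,\ov{Aw}=(\ov{T}/\mu)\,w$. Hence $\grad(\Omega)(Tw)$ is a nonzero complex multiple of $w$, and therefore of $Tw=\vec R(Tw)$; explicitly $Tw=(T\mu/\ov{T})\,\ov{A(Tw)}$. So $\vec R(Tw)$ and $\grad(\Omega)(Tw)$ are parallel, which by the characterization above means $Tw\in\Sigma$, with $\mu(Tw)=(T/\ov{T})\,\mu$. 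The remaining value $T=0$ gives the origin, the vertex of the cone $\Sigma$, so the whole line $\Sigma(w^{(1)})=\{Tw:T\in\bc\}$ lies in $\Sigma$.

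The point that needs care, and the only real content, is the conjugation. Because $\grad(\Omega)$ depends conjugate-linearly on $z$, scaling $w$ by $T$ scales $\grad(\Omega)$ by $\ov{T}$ rather than $T$; this mismatch is harmless for parallelism but multiplies the proportionality factor by the unit-modulus number $T/\ov{T}$. Consistently $|\mu(Tw)|=|\mu(w)|$, so every point of $\Sigma(w^{(1)})$ satisfies \ref{equation:linearsecond} with the same eigenvalue $1/|\mu(w)|^2$, in agreement with $w$ and $Tw$ spanning a single eigenline of the Hermitian matrix $B=A^{-1}\ov{A^{-1}}$. I would be careful not to argue membership in $\Sigma$ merely from being an eigenvector of $B$, since only the implication \ref{equation:linear}$\Rightarrow$\ref{equation:linearsecond} has been established; it is the direct parallelism argument above that certifies $Tw\in\Sigma$.
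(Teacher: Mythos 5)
Your proof is correct and takes essentially the same route as the paper: both exploit the linearity of $A$ and the conjugate-linearity of $z\mapsto\ov{A z}$ to show $\mu(T w^{(1)})=(T/\ov{T})\,\mu(w^{(1)})$, so that $T w^{(1)}$ satisfies the defining equation $\vec R=\mu\cdot\grad(\Omega)$ of $\Sigma$. Your preliminary reformulation of membership in $\Sigma$ as parallelism of $\vec R(z)$ and $\grad(\Omega)(z)$ (plus the explicit remark about $T=0$) is just a cleaner packaging of the paper's computation, and in fact quietly avoids the paper's harmless typo $T\cdot\ov{T}$ (which should read $T^2$) in the numerator when computing $\mu(T w^{(1)})$.
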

\begin{proof}
Given a point $T. w^{(1)} \in \Sigma(w^{(1)})$ we have $A (T
w^{(1)})= T A w^{(1)} = T \frac{1}{\ov {\mu(w^{(1)})}}
\ov{w^{(1)}} = \frac{T}{\ov T} . \frac{1}{\ov{\mu(w^{(1)})}} \ov{T
w^{(1)}}$. On the other hand, since $\Omega=\Omega_A$ is linear we
have $\mu(T w^{(1)})=\frac{\lg \vec R(T.w^{(1)}), \grad(\Omega)(T
.w^{(1)}) \rg }{||\grad(\Omega)(T. w^{(1)})||^2} = \frac{T . \ov T
\lg \vec R(w^{(1)}), \grad(\Omega)
(w^{(1)})\rg}{|T|^2.||\grad(\Omega)(w^{(1)})||^2} = \frac{T}{\ov
T} \mu(w^{(1)}).$ Hence we obtain $A (T w^{(1)})=\frac{1}{\ov
{\mu(T. w^{(1)})}} \ov{T.w^{(1)}}$, i.e., $T. {w^{(1)}} \in
\Sigma( {w^{(1)}})$.

\end{proof}

Let now $w^{(2)}\in \Sigma$ be another contact point, say
$w^{(2)}\in S^{2n-1}(r^\prime)$. Suppose that the contact points
$w^{(1)}$ and $w^{(2)}$ are linearly independent. Given a linear
combination $w=T_1 w^{(1)} + T_2 w^{(2)}$ of the contact points we
investigate whether this is also a contact point. Suppose
therefore that $T_1 \ne 0 \ne T_2$ and that $w\in \Sigma$. By
equation (\ref{equation:linear}) we obtain $A(T_1 w^{(1)} + T_2
w^{(2)})= \frac{1}{\ov{\mu (T_1 w^{(1)} + T_2 w^{(2)})}}. \ov{(T_1
w^{(1)} + T_2 w^{(2)})}$. On the other hand, $A(T_1 w^{(1)} + T_2
w^{(2)})= A(T_1 w^{(1)}) + A( T_2 w^{(2)}) = \frac{1}{\ov{\mu(
T_1. w^{(1)})}}. \ov{T_1 .w^{(1)}} + \frac{1}{\ov{\mu( T_2.
w^{(2)})}}. \ov{T_2 .w^{(2)}}$.
 Therefore, by linear independence of $w^{(1)}$ and $w^{(2)}$  we have
 $\mu((T_1 w^{(1)}) = \mu (T_1 w^{(1)} + T_2 w^{(2)}) = \mu(T_2
 w^{(2)})$. Since $\mu(T_j w^{(j)}) = \frac{T_j}{\ov {T_j}}
 \mu(w^{(j)})$we conclude that
$|\mu(w^{(1)})|=\mu(w^{(2)})|$.  Thus

\begin{Lemma}
If $|\mu(w^{(1)})| \ne | \mu(w^{(2)})|$ then the point $T_1
w^{(1)} + T_2 w^{(2)}, T_1 \ne 0, T_2 \ne 0$ is not a contact
point.
\end{Lemma}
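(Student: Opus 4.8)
The plan is to argue by contraposition: I assume that $w := T_1 w^{(1)} + T_2 w^{(2)}$ with $T_1 \neq 0 \neq T_2$ is a contact point and deduce $|\mu(w^{(1)})| = |\mu(w^{(2)})|$, which is precisely the negation of the hypothesis. Since this computation has essentially been carried out in the discussion preceding the statement, the task is to organize it cleanly and isolate the one step where care is needed.

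First I would record three instances of the defining equation~(\ref{equation:linear}). Applied to $w$ it gives $A w = \frac{1}{\ov{\mu(w)}}\,\ov{w}$. Because $w^{(1)}$ and $w^{(2)}$ are contact points, the previous lemma shows that the whole complex lines $\Sigma(w^{(j)})$ lie in $\Sigma$, so each $T_j w^{(j)}$ is again a contact point and $A(T_j w^{(j)}) = \frac{1}{\ov{\mu(T_j w^{(j)})}}\,\ov{T_j w^{(j)}}$ for $j=1,2$. Expanding $A w = A(T_1 w^{(1)}) + A(T_2 w^{(2)})$ by linearity of $A$ and substituting, I obtain two expressions for the same vector, each written as a $\bc$-linear combination of $\ov{w^{(1)}}$ and $\ov{w^{(2)}}$.

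The key step is to match the coefficients of $\ov{w^{(1)}}$ and $\ov{w^{(2)}}$. This is legitimate because $\ov{w^{(1)}}$ and $\ov{w^{(2)}}$ are linearly independent over $\bc$: conjugating any relation $c_1 \ov{w^{(1)}} + c_2 \ov{w^{(2)}} = 0$ yields $\ov{c_1} w^{(1)} + \ov{c_2} w^{(2)} = 0$, which forces $c_1 = c_2 = 0$ by the assumed independence of $w^{(1)}, w^{(2)}$. Since $T_1 \neq 0 \neq T_2$, matching coefficients then forces $\mu(w) = \mu(T_1 w^{(1)}) = \mu(T_2 w^{(2)})$. Finally I invoke the scaling identity $\mu(T_j w^{(j)}) = \frac{T_j}{\ov{T_j}}\mu(w^{(j)})$ established in the complex-line lemma; taking moduli and using $|T_j/\ov{T_j}| = 1$ yields $|\mu(w^{(1)})| = |\mu(w^{(2)})|$, the desired contradiction.

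The computation itself is routine linear algebra; the only genuinely delicate point is the coefficient comparison, i.e.\ confirming that passing to complex conjugates preserves linear independence and that the nonvanishing of $T_1, T_2$ permits cancelling them from the matched coefficients. Everything else is bookkeeping that mirrors the derivation already displayed above the statement.
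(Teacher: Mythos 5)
Your proposal is correct and takes essentially the same route as the paper, which proves the lemma in the displayed computation immediately preceding its statement: expand $A(T_1 w^{(1)} + T_2 w^{(2)})$ in two ways via the complex-line lemma, compare coefficients by linear independence to get $\mu(T_1 w^{(1)}) = \mu(T_2 w^{(2)})$, and conclude with the scaling identity $\mu(T_j w^{(j)}) = \frac{T_j}{\ov{T_j}}\,\mu(w^{(j)})$ and $\lv T_j/\ov{T_j} \rv = 1$. Your only addition is the explicit verification that $\ov{w^{(1)}}, \ov{w^{(2)}}$ remain linearly independent under conjugation, a small point the paper leaves implicit.
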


Then we have the following:

\begin{Proposition}
\label{Proposition:distincteigenvalueslines} Let $A$ be a $n\times
n$ nonsingular complex symmetric matrix. If the eigenvalues of
$A^{-1} \cdot \ov{A^{-1}}$ are pairwise distinct with eigenvectors
say $w^{(1)},...,w^{(n)}$ then the variety of contacts
$\Sigma({\Omega _A}, \vr)$, is the union of the $n$ lines
$\Sigma(w^{(j)}), \, j=1,...,n$.
\end{Proposition}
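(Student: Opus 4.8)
The plan is to establish the asserted equality $\Sigma(\Omega_A,\vr)=\bigcup_{j=1}^{n}\Sigma(w^{(j)})$ by proving the two inclusions separately, the whole argument resting on the observation that distinctness of the eigenvalues of $B:=A^{-1}\ov{A^{-1}}$ forces every eigenspace of $B$ to be a single complex line.

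For the inclusion $\Sigma\subset\bigcup_j\Sigma(w^{(j)})$ (the easy direction) I would take $z\in\Sigma\setminus\{0\}$ and invoke the Lemma producing (\ref{equation:linearsecond}): it gives $Bz=\frac{1}{|\mu(z)|^2}\,z$, so $z$ is an eigenvector of $B$ with the positive eigenvalue $1/|\mu(z)|^2$. Since $B$ is Hermitian with pairwise distinct eigenvalues, each eigenspace is one–dimensional, so $1/|\mu(z)|^2$ coincides with one of the eigenvalues and $z\in\bc\,w^{(j)}=\Sigma(w^{(j)})$ for the corresponding $j$. As $0$ lies on every line, this yields $\Sigma\subset\bigcup_j\Sigma(w^{(j)})$.

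The reverse inclusion is the crux. By the Lemma stating that the complex line through a contact point is contained in $\Sigma$, it suffices to show that each eigenvector $w:=w^{(j)}$ is itself a contact point, i.e.\ that it solves (\ref{equation:linear}). Let $\lambda$ be its (real, positive) eigenvalue, so $Bw=\lambda w$. The key step is to check that $A^{-1}\ov w$ is again a $\lambda$–eigenvector of $B$: conjugating $Bw=\lambda w$ and using $\ov B=\ov{A^{-1}}A^{-1}$ together with $\lambda\in\re$ gives $\ov{A^{-1}}A^{-1}\ov w=\lambda\ov w$, whence $B(A^{-1}\ov w)=A^{-1}\bigl(\ov{A^{-1}}A^{-1}\ov w\bigr)=\lambda\,A^{-1}\ov w$. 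Because the $\lambda$–eigenspace is the single line $\bc\,w$, we get $A^{-1}\ov w=c\,w$ for some nonzero $c\in\bc$; rearranging, $Aw=\tfrac{1}{c}\ov w$, which is precisely (\ref{equation:linear}) (reading off $1/\ov{\mu(w)}=1/c$). Hence $w\in\Sigma$, so $\Sigma(w^{(j)})\subset\Sigma$, and the two inclusions combine to the stated equality. Since eigenvectors for distinct eigenvalues are linearly independent, the $n$ lines are genuinely distinct.

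I expect the main obstacle to be exactly this sufficiency half: equation (\ref{equation:linearsecond}) is merely a necessary consequence of (\ref{equation:linear}), so a priori being an eigenvector of $B$ need not make a vector a contact point. The computation that $A^{-1}\ov w$ inherits the eigenvalue $\lambda$ of $w$—which uses that $\lambda$ is real (from $B$ being Hermitian) and that the eigenspace is a line (from the distinctness hypothesis)—is what upgrades necessity to sufficiency and is the heart of the proof.
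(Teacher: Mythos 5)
Your proof is correct, and it is in fact more complete than the paper's own treatment. The paper gives no standalone proof of Proposition~\ref{Proposition:distincteigenvalueslines}: it is stated as an immediate consequence of the preceding lemmas, which establish only (i) that every contact point $z$ satisfies $A^{-1}\ov{A^{-1}}z=\frac{1}{|\mu(z)|^2}z$ (equation~(\ref{equation:linearsecond})), hence under the distinctness hypothesis lies on one of the eigenlines of $B:=A^{-1}\ov{A^{-1}}$; (ii) that the complex line through a contact point is contained in $\Sigma$; and (iii) that nontrivial combinations $T_1w^{(1)}+T_2w^{(2)}$ with $|\mu(w^{(1)})|\ne|\mu(w^{(2)})|$ are not contact points. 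Together these yield the inclusion $\Sigma\subset\bigcup_j\bc\,w^{(j)}$ plus invariance of $\Sigma$ under complex scaling, but nowhere does the paper verify the reverse inclusion, namely that each eigenvector $w^{(j)}$ actually solves the contact equation~(\ref{equation:linear}) --- exactly the point you flag, since (\ref{equation:linearsecond}) is only a necessary condition. Your con-eigenvector computation supplies the missing half: conjugating $Bw=\la w$ (legitimately, since $\la\in\re$ because $B$ is Hermitian) gives $\ov{A^{-1}}A^{-1}\ov w=\la\ov w$, hence $B(A^{-1}\ov w)=\la A^{-1}\ov w$, and one-dimensionality of the eigenspace forces $A^{-1}\ov w=c\,w$ with $c\ne0$, i.e.\ $Aw=c^{-1}\ov w$, which is precisely the contact condition (in effect you are producing a Takagi con-eigenvector for the symmetric matrix $A$). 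So your argument follows the paper's framework for the easy inclusion (you even reuse its three lemmas), but it repairs a genuine gap in the sufficiency direction; without that step the paper has only shown $\Sigma$ is contained in, and scaling-invariant within, the union of eigenlines, and the assertion in Theorem~\ref{Theorem:linearapprox} that the variety of contacts \emph{is} a union of $n$ complex lines (rather than at most $n$) would not be justified. One microscopic point you could make explicit in the first inclusion: for $z\in\Sigma\setminus\{0\}$ one has $\mu(z)\ne0$ (otherwise $z=\mu\,\ov{Az}=0$), so division by $|\mu(z)|^2$ is legitimate.
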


Denote by: \begin{itemize}

\item $\Sim(n)$ the subspace of symmetric $n\times n$ complex
matrices.

\item  $\Sim (n)^* \subset \Sim(n)$ the open subset of invertible
symmetric matrices.

\item  $\mathcal M(n)\subset \Sim(n)^*$ the set of Morse type
symmetric invertible matrices.

\item   $\mathcal M{\mathbb R} S(n)\subset \Sim(n)^*$  the set of
all invertible symmetric {\em real} matrices having eigenvalues
$\lambda_i (1 \leq i \leq n)$ such that $\lambda_i ^2 \ne
\lambda_j ^2, \forall i \ne j$.

\end{itemize}

\begin{Proposition}
\label{Propostion:zariskidense} We have $\mathcal M(n)\subset
\Sim(n)^*\subset \Sim(n)$ and the inclusions are dense.

\end{Proposition}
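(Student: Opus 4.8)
The plan is to treat the two inclusions separately, the second being the substantial point. The inclusion $\mathcal M(n)\subset\Sim(n)^*$ is immediate from the definitions, since a Morse type matrix is in particular invertible. For the density of $\Sim(n)^*$ in $\Sim(n)$ I would simply observe that $\Sim(n)\setminus\Sim(n)^*=\{A\in\Sim(n):\det A=0\}$ is the zero locus of a nonzero polynomial on the finite-dimensional complex vector space $\Sim(n)$, hence a proper algebraic hypersurface; its complement is therefore dense in both the Zariski and the Euclidean topology.

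The real work is the density of $\mathcal M(n)$ in $\Sim(n)^*$, and here the obstacle is that the Morse condition for $\Omega_A$ is governed by the matrix $B=A^{-1}\ov{A^{-1}}$, which depends on $A$ antiholomorphically through the conjugation. To tame this I would first identify the relevant invariant. Since $A$ is symmetric we have $A^*=\ov A$, so $B=A^{-1}\ov{A^{-1}}=(\ov A\,A)^{-1}=(A^*A)^{-1}$; consequently the eigenvalues of $B$ are the reciprocals of the squared singular values $\sigma_1^2,\dots,\sigma_n^2$ of $A$, and the eigenvalues of $B$ are pairwise distinct if and only if the singular values of $A$ are pairwise distinct.

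Next I would pass to a normal form using the Autonne--Takagi factorization: every $A\in\Sim(n)$ can be written $A=V\Sigma V^T$ with $V$ unitary and $\Sigma=\operatorname{diag}(\sigma_1,\dots,\sigma_n)$, $\sigma_i\ge 0$. The crucial point is that $V$ is unitary, so the coordinate change $z=\ov V w$ preserves the distance function $\vr$, while a short computation (using $V^T\ov V=\Id$, whence $V^*A\ov V=\Sigma$) gives $z^T A z=w^T\Sigma w$. Thus $\fa(\Omega_A)$ is carried by a $\vr$-preserving change of coordinates to the diagonal model $\fa(df_\sigma)$ with $f_\sigma=\tfrac12\sum_i\sigma_i w_i^2$. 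Invoking the explicit Hessian computation of \S\ref{section:examples} for $f_\lambda=\sum_j\lambda_j z_j^2$ (with $\lambda_j=\sigma_j/2$, so that $|\lambda_i|\ne|\lambda_j|\iff\sigma_i\ne\sigma_j$), or alternatively Proposition~\ref{Proposition:distincteigenvalueslines} together with Theorem~\ref{Theorem:caracterizationMorsetype}, one concludes that when the $\sigma_i$ are positive and pairwise distinct every critical point of $\vr\big|_L$ is nondegenerate, i.e. $A\in\mathcal M(n)$.

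Finally, the density follows by perturbing in the Takagi normal form. Given $A\in\Sim(n)^*$, write $A=V\Sigma V^T$ with all $\sigma_i>0$, choose a diagonal $\Sigma'$ with pairwise distinct positive entries and $\|\Sigma'-\Sigma\|$ arbitrarily small, and set $A'=V\Sigma' V^T$. Then $A'$ is again symmetric and invertible, is as close to $A$ as we wish, and has pairwise distinct singular values, so $A'\in\mathcal M(n)$ by the previous step. I expect the only delicate points to be the bookkeeping in the identity $V^*A\ov V=\Sigma$ and the verification that the perturbation stays inside $\Sim(n)$; both are automatic from the symmetric Takagi form, which is precisely why that factorization is the right tool rather than an arbitrary diagonalization.
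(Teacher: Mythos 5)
Your proposal is correct, and it takes a genuinely different route from the paper. The paper's own proof is non-constructive and algebraic: it observes that $A\in\Sim(n)^*$ fails to be of Morse type exactly when the hermitian matrix $B=A^{-1}\ov{A^{-1}}$ has a multiple eigenvalue, i.e.\ when the discriminant of the characteristic polynomial of $B$ vanishes, so $\Sim(n)^*\setminus\mathcal M(n)$ is a proper Zariski-closed subset (properness being witnessed by the inclusion $\mathcal M\mathbb{R}S(n)\subset\mathcal M(n)$, the real symmetric matrices with $\lambda_i^2\ne\lambda_j^2$), and density follows. You instead identify the Morse-relevant invariant concretely as the singular values of $A$ (via $B=(A^*A)^{-1}$), reduce to the diagonal model by the Autonne--Takagi factorization $A=V\Sigma V^T$ through the $\vr$-preserving unitary change $z=\ov V w$ (your identities $V^T\ov V=\Id$ and $V^*A\ov V=\Sigma$ check out), verify nondegeneracy from the explicit Hessian computation of \S\ref{section:examples}, and then perturb $\Sigma$ directly to distinct positive entries. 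Each approach buys something: the paper's discriminant argument is what yields the \emph{Zariski} density formulation invoked in the last assertion of Theorem~\ref{Theorem:linearapprox} (though, since $A\mapsto B$ involves conjugation, its "algebraic condition" must be read in the real-algebraic sense, a point the paper glosses over); your argument gives Euclidean density constructively and, notably, actually \emph{proves} the implication "distinct eigenvalues of $B$ implies Morse type," which the paper asserts as a characterization without detailed justification. One small caution: your parenthetical alternative via Proposition~\ref{Proposition:distincteigenvalueslines} together with Theorem~\ref{Theorem:caracterizationMorsetype} is not quite self-contained, since condition (ii) of that theorem also requires transversality of $\Sigma\setminus\{0\}$ to $\fa(\Omega)$, which the union-of-lines conclusion alone does not supply; your primary route through the Hessian computation avoids this and is the one to keep.
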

\begin{proof}
First we observe that $\Sim (n)^*$ is a Zariski open subset of the
affine manifold (affine vector space) $\Sim (n)$, it is the
complement of the proper Zariski closed set (algebraic
submanifold) $\{A \in \Sim(n) : \det A=0\}$. We note that
$\mathcal M(n)$ contains $\mathcal M \mathbb RS(n)$. The
complement $\Sim(n)^* \setminus \mathcal M(n)$ is characterized by
the fact that for $A\in \Sim(n)^*\setminus \mathcal M(n)$ the
hermitian matrix $A^{-1} . \ov{A^{-1}}$ has some multiple
eigenvalue if and only if its characteristic polynomial
$m_{A^{-1}\ov{A^{-1}}}(\lambda)\in \bc [\lambda]$ has some
multiple zero. This is an algebraic condition on its coefficients.
This argumentation shows that $\Sim(n)^*\setminus \mathcal M(n)$
is a closed Zariski (algebraic) subset of $\Sim(n)^*$. Thus
$\mathcal M(n)$ is dense in $\Sim(n)^*$ which is dense in
$\Sim(n)$.
\end{proof}

\begin{Definition}
{\rm A $n\times n$ symmetric complex matrix $A$ will be called of
{\it Morse type} if the corresponding foliation $\Omega _A$ is of
Morse type.}
\end{Definition}

\begin{proof}[Proof of Theorem~\ref{Theorem:linearapprox}]
Theorem~\ref{Theorem:linearapprox} follows from
Proposition~\ref{Proposition:distincteigenvalueslines} and
~\ref{Propostion:zariskidense}.
\end{proof}

\noindent{\bf Acknowledgement}: This research work was done during
a visit of the first named author to the Instituto Nacional de
Matemática Pura e Aplicada (IMPA), Rio de Janeiro, Brazil. The
authors want to thank IMPA for the hospitality.

\bibliographystyle{amsalpha}

\begin{thebibliography}{31}
\frenchspacing



\bibitem{Arnold} V. I. Arnol'd; {\it Remarks on singularities of
finite codimension in complex dynamical systems}. Funkts. Anal.
Prilozhen, {\bf 3}, 1--5 (1969).



\bibitem{Camachoetal} C. Camacho, N. Kuiper, J.
Palis; The topology of holomorphic flows with singularity. Inst.
Hautes \'Etudes Sci. Publ. Math., no. 48 (1978), 5--38.



\bibitem{Camacho-LinsNeto} C. Camacho, A. Lins Neto:
Geometric theory of foliations. Translated from the Portuguese by
Sue E. Goodman. Birkhäuser Boston, Inc.,  1985.






\bibitem{GSV} X. Gomez-Mont, J. Seade, A. Verjovsky. Topology of a
holomorphic vector field around an isolated singularity.
Functional Analysis and Its Applications, 27 (1993) 97-103.




\bibitem{Guck} I. Guckenheimer; {\it Hartman's theorem for complex
flows in the Poincar\'e domain}. Comp. Math., {\bf 24}, (1972)
75--82.



\bibitem{[Ito]} T. Ito; A. Poincar\'e-Bendixson type theorem for
holomorphic vector fields, RIMS Pub. 878 (June 1994), 1-9.




\bibitem{Ito-ScarduaMMJ} T. Ito, B. Sc\'ardua; On holomorphic foliations transverse to
spheres. Moscow Mathematical Journal,   5, No. 2, (2005),
379--397.

\bibitem{Ito-Scarduamrl} T. Ito, B. Sc\'ardua; On the non-existence of a
codimension one holomorphic foliation transverse to a sphere.
Mathematical Research Letters, 12, No.5, 2005, 713-717.



\bibitem{Ito-Scarduatopology} T. Ito, B. Sc\'ardua;
{A Poincar\'e-Hopf type theorem for holomorphic one-forms}.
Topology,  44, No.1, (2004), 73-84.



\bibitem{[Malgrange]} B. Malgrange; Frobenius avec singularit\'es I: codimension
un. Pub. Math. IHES 46 (1976) 162-173.




\bibitem{Pugh} C. C. Pugh; A generalized Poincar\'e index-formula,
Topology 7, 1968, 217--226.




\end{thebibliography}

\vglue.2in

\begin{tabular}{ll}
Toshikazu Ito  & \qquad  Bruno Sc\'ardua\\
Department of Natural Science  & \qquad Inst. Matem\'atica\\
Ryukoku University  & \qquad Universidade Federal do Rio de Janeiro\\
Fushimi-ku, Kyoto 612 & \qquad  Caixa Postal 68530\\
JAPAN   & \qquad 21.945-970 Rio de Janeiro-RJ\\
&  \qquad BRAZIL
\end{tabular}

\end{document}